\documentclass[11pt]{article}
\usepackage[scale=0.8]{geometry}
\usepackage[english]{babel}
\usepackage{amssymb,amsmath,amsthm,bm}
\usepackage{amsfonts}
\usepackage{latexsym}
\usepackage{graphicx}
\usepackage{tikz}
\usetikzlibrary{fit,backgrounds}
\usepackage{lineno}
\usepackage{enumerate}
\usepackage{mathtools}
\usepackage[
pdfauthor={},
pdftitle={},
pdfstartview=XYZ,
bookmarks=true,
colorlinks=true,
linkcolor=blue,
urlcolor=blue,
citecolor=blue,
bookmarks=true,
linktocpage=true,
hyperindex=true
]{hyperref}

\theoremstyle{plain}
\newtheorem{thm}{Theorem}[section]

\newtheorem{lem}[thm]{Lemma}

\newtheorem{coro}[thm]{Corollary}

\theoremstyle{remark}

\numberwithin{equation}{section}
\newcommand{\CC}{\mathcal{C}}
\DeclareMathOperator{\df}{def}

\newcommand{\pbar}{\overline{\varphi}}

\begin{document}
%\linenumbers
\title{Exploring the world of edge-chromatic  3-critical graphs}

\author{Le Chen\footnote{Department of Mathematics and Statistics, Auburn University, Auburn, AL 36849.
Email: {\tt le.chen@auburn.edu}.
Supported in part by NSF grants DMS-2246850 and DMS-2443823 and by
a Collaboration Grant for Mathematicians (\#959981) from the Simons Foundation.}
\qquad
Songling Shan\footnote{Department of Mathematics and Statistics, Auburn University, Auburn, AL 36849, USA.
Email:  {\tt szs0398@auburn.edu}.
Supported in part by NSF grant   DMS-2451895.
}
}

\date{\today}
\maketitle

\begin{abstract}
  A graph $G$ with maximum degree $\Delta$ is \emph{$\Delta$-critical} if it is connected, satisfies $\chi'(G)=\Delta+1$, and the deletion of any edge reduces its chromatic index to $\Delta$. A $\Delta$-critical graph $G$ is called \emph{nontrivial} if it contains no $\Delta$-overfull subgraph; that is, no $H \subseteq G$ such that $|E(H)| > \Delta \lfloor |V(H)| /2 \rfloor$. There are no $1$-critical graphs, and the $2$-critical graphs are exactly the odd cycles. By work of Chetwynd and Yap from 1983, there is a unique nontrivial $3$-critical graph of order $9$, and exactly two nontrivial $3$-critical graphs of order $11$. In 2005, Bokal, Brinkmann, and Gr\"unewald showed that there are exactly fourteen nontrivial $3$-critical graphs of order $13$. For even orders, Brinkmann and Steffen proved in 1997 that no $3$-critical graphs of even order exist below order $22$,  there is exactly one
  3-critical graph of order $22$, and that there are exactly nine $3$-critical graphs of order $24$.

  To the best of our knowledge, there has been no further progress on the existence of nontrivial $3$-critical graphs of odd orders beyond the cases established two decades ago.   In this paper, using computer-assisted search techniques, we determine the exact numbers of nontrivial $3$-critical graphs of odd orders from $15$ to $21$.  The same data pipeline also reproduces the known order-$22$ count of Brinkmann and Steffen, with one nontrivial $3$-critical graph.   Beyond enumeration,  we prove a characterization theorem for all nontrivial 3-critical graphs, with one case based on snarks.  We also provide an analysis of our search algorithm.

  \medskip

  \noindent {\textbf{Keywords}: Edge-chromatic  $3$-critical graph;  Overfull graph;  Snark; Vizing's Adjacency Lemma}
\end{abstract}

\section{Introduction}\label{sec:introduction}

For integers $p$ and $q$, let $[p,q]=\{i\in\mathbb{Z}:p\le i\le q\}$, and write $[q]$ for $[1,q]$. Throughout the paper, all graphs are finite and simple unless
explicitly stated otherwise. 

For an integer $k \ge 0$,
an \emph{edge $k$-coloring} of  a graph $G$ is a map $\varphi:E(G)\to [k]$ such that adjacent edges receive distinct colors. Each set of edges colored by the same color under $\varphi$ is a \emph{color class} of $G$. The chromatic index $\chi'(G)$ is the least $k$ for which $G$ has an edge $k$-coloring.
In the 1960s, Gupta~\cite{Gupta-67}  and, independently, Vizing~\cite{Vizing-2-classes}  showed
that for all  graphs $G$,  $\Delta(G) \le \chi'(G) \le \Delta(G)+1$.    This result
leads
to a natural classification of  all simple graphs. Following Fiorini and Wilson~\cite{fw},   a  graph $G$ is of \emph{class 1} if $\chi'(G) = \Delta(G)$ and of \emph{class 2} if $\chi'(G) = \Delta(G)+1$.  Holyer~\cite{Holyer} showed that it is NP-complete to determine whether an arbitrary  graph is of class 1.
However, a class of graphs fall into class 2 trivially because they have too many edges with respect to their maximum degree. They are overfull graphs.
A   graph $G$ with $|E(G)|>\Delta(G) \lfloor|V(G)|/2 \rfloor$
is \emph{overfull}  and an overfull subgraph $H$ of $G$ with  $\Delta(H)=\Delta(G)$
is called a \emph{$\Delta(G)$-overfull subgraph}.  Since in any edge $k$-coloring,
each color class  restricted to $H$
is a matching  of
size at most $\lfloor   |V(H)|/2\rfloor$, it follows  that if $G$ contains a $\Delta(G)$-overfull subgraph $H$, then $\chi'(G) \ge \chi'(H) \ge  |E(H)|/ \lfloor |V(H)|/2\rfloor>\Delta(H)=\Delta(G)$.
Thus $G$ is class 2 by  the result of Gupta~\cite{Gupta-67}  and  Vizing~\cite{Vizing-2-classes}.

A graph $G$ with $\Delta(G)=k$ is called \emph{edge-chromatic
$k$-critical} if it is connected, $\chi'(G)=k+1$, and
$\chi'(G-e)=k$ for every edge $e\in E(G)$.  An edge $e\in E(G)$
is \emph{critical} if $\chi'(G-e)<\chi'(G)$. 
Since we are mainly concerned
with edge colorings, we write ``$k$-critical'' for edge-chromatic
$k$-critical. By the definition and the theorem of Gupta~\cite{Gupta-67}
and Vizing~\cite{Vizing-2-classes}, a $k$-critical graph is a connected 
edge-minimal class~$2$ graph of maximum degree $k$.
There are no $1$-critical graphs, and the $2$-critical graphs are exactly
the odd cycles. However, much less is known about $3$-critical graphs.

We call a $k$-critical graph $G$ \emph{trivial} if $G$ contains a
$k$-overfull subgraph. Otherwise, $G$ is \emph{nontrivial}. For a graph
with maximum degree $3$, the inequality $|E(G)|>3\lfloor |V(G)|/2\rfloor$
forces $G$ to have exactly one vertex of degree $2$. Hence every
$2$-connected trivial $3$-critical graph is overfull, and is obtained
from a cubic graph by subdividing one edge.

We first observe that no cubic graph is $3$-critical. Suppose to the contrary
that a cubic graph $H$ is $3$-critical, and let $e=uv\in E(H)$. Since $H$ is
$3$-critical, $H-e$ has a proper edge $3$-coloring, and $u,v$ are its only
vertices of degree~$2$. The two missing colors at $u$ and $v$ must differ:
otherwise, assigning that common color to $e$ would extend the coloring to all
of $H$, contradicting $\chi'(H)=4$. Form $H^*$ from $H-e$ by adding a new vertex
$x$ adjacent to both $u$ and $v$, and color $xu$ and $xv$ with the colors missing
at $u$ and $v$, respectively. Since these colors are distinct, this is a proper
edge $3$-coloring of $H^*$. But $H^*$ has odd order $|V(H)|+1$ and
$\tfrac{3}{2}|V(H)|+1$ edges, so it is $3$-overfull and hence class~$2$; this
contradicts the existence of a proper edge $3$-coloring. Therefore no cubic graph
is $3$-critical.

Consequently every $3$-critical graph has a vertex of degree~$2$, and by a result
of Jakobsen~\cite{jakobsen1974a} no $3$-critical graph has exactly two vertices
of degree~$2$. Thus  a $3$-critical graph has exactly one
degree-$2$ vertex or at least three.

These two possibilities correspond exactly to the trivial and nontrivial cases. A
$3$-critical graph $G$ is nontrivial if and only if it has at least three vertices
of degree~$2$. Indeed, since $G$ is edge-critical, it has no proper $3$-overfull
subgraph, so the only possible $3$-overfull subgraph of $G$ is $G$ itself; and a
subcubic graph is $3$-overfull precisely when it has odd order and exactly one
vertex of degree~$2$. Thus $G$ is trivial exactly when it has a single degree-$2$
vertex, and nontrivial exactly when it has at least three.

The structure of nontrivial $3$-critical graphs is nonetheless more involved. In
this paper we enumerate the nontrivial $3$-critical graphs of order at most $22$,
and we establish a characterization of all nontrivial $3$-critical graphs.

Much of the interest in finding $k$-critical graphs was inspired by
the Edge-chromatic Critical Graph Conjecture,
  proposed by Beineke and
 Wilson~\cite{beineke1973} in 1973  and  independently by Jakobsen~\cite{jakobsen1974a} in 1974, stating that every edge-chromatic critical graph  has odd order.  In the same paper, Jakobsen gave a complete list of all 3-critical graphs on at most 9 vertices. He showed that 3-critical graphs exist on 5, 7, and 9 vertices, with the graph obtained from the Petersen graph by deleting one vertex being the only nontrivial 3-critical graph of odd order at most 9. He also showed that no 3-critical graph of even order has at most 9 vertices.
 Beineke and Fiorini in 1976~\cite{BeinekeFiorini1976}
 determined   $k$-critical graphs for all possible $k$   up to $7$ vertices,
 and  proved that there is no $3$-critical graph of even order less than $12$ and no $3$-critical graph of order $12$.
 Based on results of Beineke and Fiorini~\cite{BeinekeFiorini1976}, Fiorini and Wilson~\cite{fw} gave the first complete lists of edge-chromatic critical graphs of order $n\le 8$ and of even order $n\le 10$. Chetwynd and Yap~\cite{ChetwyndYap1983} closed the order $9$ gap.
 It turned out that the Petersen graph minus a vertex is the only nontrivial critical graph on up to 10 vertices. More than ten years later in 1998, with computer-aided proof, Brinkmann and Steffen~\cite{BrinkmannSteffen1998} showed that
there is no edge-chromatic critical graph of order 12, and
there are precisely two nontrivial edge-chromatic critical graphs on 11 vertices.  In 2005, Bokal, Brinkmann, and Gr\"unewald~\cite{BokalBrinkmannGrunewald2005} showed
that there are no critical graphs of order 14 and that there are exactly 14 nontrivial critical graphs of order 13, which are all 3-critical.
The results were  proved partly by theoretical techniques and partly by actual computation.
Goldberg~\cite{Goldberg1981} constructed an infinite family of $3$-critical graphs of even order, whose smallest member has $22$ vertices. Chetwynd and Fiol independently found two $4$-critical graphs on $18$ vertices; see the survey account in~\cite{HiltonWilsonProgress}. Aided by computer search,
Brinkmann and Steffen~\cite{BrinkmannSteffen1997}
showed that the smallest 3-critical graph of even order has 22 vertices 
 and that there are exactly nine of order 24. The graph of order 22 is shown to be the smallest 3-critical graph of even order. They also showed that there are exactly two 4-critical graphs of order 18, and that these graphs are the smallest 4-critical graphs of even order.

Our focus in this paper is the finite census of nontrivial $3$-critical graphs through order $22$.  We summarize the known cases in Table~\ref{table:enumeration}.
\begin{table}[h]
\centering
\begin{tabular}{c|c|l}
\hline
Order $n$ & \# Nontrivial 3-critical graphs & Reference \\ \hline
4,5,6,7,8  & 0 & \cite{jakobsen1974a} \\
9  & 1 & \cite{ChetwyndYap1983,jakobsen1974a} \\
10 & 0 & \cite{BeinekeFiorini1976} \\
11 & 2 & \cite{BrinkmannSteffen1998} \\
12 & 0 & \cite{BeinekeFiorini1976} \\
13 & 14 & \cite{BokalBrinkmannGrunewald2005}; reproduced here \\
14 & 0 & \cite{BokalBrinkmannGrunewald2005}; reproduced here \\
15 & 94 & this paper \\
17 & 774 & this paper \\
19 & $6{,}984$ &  this paper \\
21 & $70{,}530$ &  this paper \\
16,18,20 & 0 & \cite{BrinkmannSteffen1997}; reproduced here \\
22 & 1 & \cite{BrinkmannSteffen1997}; reproduced here \\
24 & 9 & \cite{BrinkmannSteffen1997} \\
\hline
\end{tabular}
\caption{Number of known nontrivial $3$-critical graphs by order}
\label{table:enumeration}
\end{table}

Apart from the enumeration result shown in Table~\ref{table:enumeration}, our
other main contribution in this paper concerns the generation of nontrivial
$3$-critical graphs. We discover a new construction and, building on it, give a
characterization of all nontrivial $3$-critical graphs. To state these results,
we first define the following three types of operations.
The first two types of operations are known, for example, see~\cite{BokalBrinkmannGrunewald2005}. 
The third type, with $H$ taken to be $K_{\Delta,\Delta-1}$,
was introduced by Gr\"unewald and Steffen in 1999~\cite{GruenewaldSteffen}, 
and is known as a Meredith-extension~\cite{MR311503}.

\textbf{Vertex-blowup.}
Let $H$ be a subcubic graph and let $x\in V(H)$.
The graph $G$ obtained from $H-x$ and a disjoint triangle
 by joining each neighbor of $x$ to a distinct vertex
of the triangle is called a \emph{vertex-blowup} of $H$ at $x$.

\textbf{Haj\'os-join.}
Let $G_1$ and $G_2$ be graphs, and let
$u_1v_1\in E(G_1)$ and $u_2v_2\in E(G_2)$.
The graph $G$ obtained from $G_1-u_1v_1$ and $G_2-u_2v_2$
by identifying $u_1$ and $u_2$, and then adding the edge
$v_1v_2$, is called a \emph{Haj\'os-join} of $G_1$ and $G_2$.

\textbf{Meredith-type extension.} 
Let $G$ and $H$ be two disjoint graphs, each with maximum degree
$\Delta$. Suppose that $H$ has exactly $\Delta$ vertices of degree
$\Delta-1$, with every other vertex of degree $\Delta$; we call the
degree-$(\Delta-1)$ vertices the \emph{connector vertices} of $H$. Let
$y\in V(G)$ be a vertex of degree $\Delta$, with neighborhood
$N_{G}(y)=\{y_1,\ldots,y_\Delta\}$. The \emph{Meredith-type extension of
$G$ through  $H$ at $y$} is the graph $G'$ obtained from the disjoint union of
$H$ and $G-y$ by adding a perfect matching of $\Delta$ edges between the
connector vertices of $H$ and the vertices $y_1,\ldots,y_\Delta$.

For $\Delta=3$, the gadget $H$ has exactly three connector vertices of
degree $2$ and all other vertices of degree $3$; the vertex $y$ has three
neighbors $y_1,y_2,y_3$; and the three matching edges join the connectors to
$y_1,y_2,y_3$, as illustrated in Figure~\ref{fig:Meredith-type extension-delta3}.

\begin{figure}[htbp]
  \centering
  \begin{tikzpicture}[scale=0.95,
    v/.style={circle, draw, fill=white, inner sep=1.6pt, minimum size=4pt},
    conn/.style={circle, draw, fill=orange!30, inner sep=1.6pt, minimum size=4pt},
    nbr/.style={circle, draw, fill=blue!18, inner sep=1.6pt, minimum size=4pt},
    box/.style={draw, rounded corners, dashed, inner sep=12pt},
    lab/.style={draw=none, fill=none, font=\small},
    every label/.style={draw=none, fill=none, inner sep=1pt, font=\scriptsize}]
 
    %==================== H : connectors on the RIGHT ====================
    \node[lab] at (1.1,2.7) {$H$};
    \node[v] (a)  at (0.0,0.0) {};
    \node[v] (t1) at (1.1,1.5) {};
    \node[v] (t2) at (1.1,0.0) {};
    \node[v] (t3) at (1.1,-1.5) {};
    \node[conn,label=right:{$x_1$}] (x1) at (2.4,1.5) {};
    \node[conn,label=right:{$x_2$}] (x2) at (2.4,0.0) {};
    \node[conn,label=right:{$x_3$}] (x3) at (2.4,-1.5) {};
    \draw (a)--(t1); \draw (a)--(t2); \draw (a)--(t3);
    \draw (t1)--(t2); \draw (t2)--(t3);
    \draw (t1)--(x1); \draw (t3)--(x3);
    \draw (x1)--(x2); \draw (x2)--(x3);
    \begin{scope}[on background layer]
      \node[box, fit=(a)(t1)(t2)(t3)(x1)(x2)(x3)] {};
    \end{scope}
 
    %==================== G : x in the MIDDLE, symmetric edges ====================
    \node[lab] at (6.3,2.7) {$G$};
    \node[v,label=left:{$y$}] (y) at (5.2,0.0) {};
    \node[nbr,label=above:{$y_1$}] (y1) at (6.5,1.5) {};
    \node[nbr,label={[yshift=1pt]above:{$y_2$}}] (y2) at (6.5,0.0) {};
    \node[nbr,label=below:{$y_3$}] (y3) at (6.5,-1.5) {};
    \draw (y)--(y1); \draw (y)--(y2); \draw (y)--(y3);
    \node[v] (s1) at (7.8,1.5) {};
    \node[v] (s2) at (7.8,0.0) {};
    \node[v] (s3) at (7.8,-1.5) {};
    \draw (y1)--(s1); \draw (y1)--(s2);
    \draw (y2)--(s2); \draw (y2)--(s3);
    \draw (y3)--(s3); \draw (y3)--(s1);
    \begin{scope}[on background layer]
      \node[box, fit=(y)(y1)(y2)(y3)(s1)(s2)(s3)] {};
    \end{scope}
 
    %==================== arrow ====================
    \draw[->, very thick] (8.9,0.0) -- (10.1,0.0)
      node[midway, above, font=\scriptsize]{delete $y$};
 
    %==================== G' : the extension ====================
    \node[lab] at (12.0,2.7) {$G'$};
    \node[v] (A)  at (10.9,0.0) {};
    \node[v] (T1) at (12.0,1.5) {};
    \node[v] (T2) at (12.0,0.0) {};
    \node[v] (T3) at (12.0,-1.5) {};
    \node[conn,label={[yshift=2pt]above left:{$x_1$}}] (C1) at (13.1,1.5) {};
    \node[conn,label={[yshift=1pt]above left:{$x_2$}}] (C2) at (13.1,0.0) {};
    \node[conn,label=below left:{$x_3$}] (C3) at (13.1,-1.5) {};
    \draw (A)--(T1); \draw (A)--(T2); \draw (A)--(T3);
    \draw (T1)--(T2); \draw (T2)--(T3);
    \draw (T1)--(C1); \draw (T3)--(C3);
    \draw (C1)--(C2); \draw (C2)--(C3);
    \node[nbr,label=above:{$y_1$}] (Y1) at (14.4,1.5) {};
    \node[nbr,label={[yshift=1pt]above:{$y_2$}}] (Y2) at (14.4,0.0) {};
    \node[nbr,label=below:{$y_3$}] (Y3) at (14.4,-1.5) {};
    \node[v] (Z1) at (15.7,1.5) {};
    \node[v] (Z2) at (15.7,0.0) {};
    \node[v] (Z3) at (15.7,-1.5) {};
    \draw (Y1)--(Z1); \draw (Y1)--(Z2);
    \draw (Y2)--(Z2); \draw (Y2)--(Z3);
    \draw (Y3)--(Z3); \draw (Y3)--(Z1);
    \draw[red, very thick] (C1)--(Y1);
    \draw[red, very thick] (C2)--(Y2);
    \draw[red, very thick] (C3)--(Y3);
     \begin{scope}[on background layer]
      \node[box, fit=(A)(T1)(T2)(T3)(C1)(C2)(C3)] {};
      \node[box, fit=(Y1)(Y2)(Y3)(Z1)(Z2)(Z3)] {};
    \end{scope}
 
  \end{tikzpicture}
  \caption{Meredith-type extension for $\Delta=3$. Left: the gadget $H$ with
  its three degree-$2$ connector vertices $x_1,x_2,x_3$ (orange) and the host $G$ with the degree-$3$ vertex $y$.  Right: the
  extension $G'$ is formed from $H$ and $G-y$ by adding the three matching
  edges $x_iy_i$ (red). Each connector rises from degree $2$ to degree $3$,
  while each $y_i$ trades the edge $yy_i$ for the edge $x_iy_i$ and keeps degree
  $3$, so $\Delta(G')=3$ and $H$ has replaced the vertex $y$.}
  \label{fig:Meredith-type extension-delta3}
\end{figure}

A \emph{snark} is a cubic class~$2$ graph that has no triangle and  is cyclically
$4$-edge-connected, where, for an integer $k\ge 0$, a graph $G$ is
\emph{cyclically $k$-edge-connected} if for every edge set $F$ of size less
than $k$, at most one component of $G-F$ contains a cycle. Our main result is
the following.

\begin{thm}\label{thm:characterization}
    Let $G$ be a nontrivial $3$-critical graph of order $n$. Then one of the
    following holds.
    \begin{enumerate}[(a)]
        \item $G$ is obtained from a nontrivial $3$-critical graph of order
        $n-2$ through a vertex-blowup.
        \item $G$ is obtained from two smaller $3$-critical graphs, at least one
        of which is nontrivial, through a Haj\'os-join.
        \item $G$ is obtained from a nontrivial $3$-critical graph by a
        Meredith-type extension through  a class 1 graph.
        \item $n$ is odd and $G$ is obtained from a snark of order $n+1$ by
        deleting exactly one vertex and some edges.
        \item $n$ is even and $G$ is obtained from a snark of order $n$ by
        deleting some edges.
    \end{enumerate}
\end{thm}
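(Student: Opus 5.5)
The plan is to run a case analysis on the edge-connectivity and cyclic edge-connectivity structure of $G$. We peel off small edge cuts as Haj\'os-joins or Meredith-type extensions, triangles as vertex-blowups, and treat the highly connected remainder through snarks. Throughout, $G$ is nontrivial $3$-critical, so it has at least three vertices of degree $2$. We use standard facts about critical graphs: $G$ is $2$-connected, and by Vizing's Adjacency Lemma every vertex of degree $2$ has both neighbors of degree $3$ and no two degree-$2$ vertices are adjacent.

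\textbf{Step 1 (2-edge-cuts and Haj\'os-joins).} Suppose $G$ has a $2$-edge-cut $\{a_1b_1,a_2b_2\}$ separating sides $A$ and $B$. Color $G-a_1b_1$ with three colors. Parity counting of each color class on $A$ determines whether $a_2b_2$ must share its color with the missing one. From this we build two graphs. $G_1$ is obtained from $G[A]$ by adding an edge $a_1a_2$, or a new vertex joined to $a_1$ and $a_2$. $G_2$ is obtained analogously from $G[B]$. We then verify two things. First, both are class $2$, since a $3$-coloring of either would combine with a coloring of the other side into a coloring of $G$. Second, both are critical, using criticality of the edges of $G$ on each side. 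This exhibits $G$ as a Haj\'os-join as in (b). Since $G$ has at least three degree-$2$ vertices, at least one $G_i$ has at least two of them. By Jakobsen's result neither piece can have exactly two, so that piece is nontrivial or reduces further.

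\textbf{Step 2 (nontrivial 3-edge-cuts).} Now assume $G$ is $3$-edge-connected. Take a $3$-edge-cut $\{x_iy_i\}_{i=1}^3$ with both sides $X$ and $Y$ containing a cycle, and choose $X$ minimal. Contract $X$ to a single vertex $y$, giving $G_Y$. Also form $H=G[X]$, whose connectors $x_1,x_2,x_3$ have degree $2$ in $H$.
\begin{itemize}
\item If $X$ is a triangle, then $G$ is a vertex-blowup of $G_Y$. Critical graphs are preserved under this inverse operation, since in any $3$-coloring the three cut edges receive distinct colors and a triangle realizes exactly this. This gives (a), with $G_Y$ of order $n-2$ and nontrivial because it has the same degree-$2$ vertices.
\item If every vertex of $X$ other than the $x_i$ has degree $3$ in $G$, and $H$ is class $1$, we aim for (c). Parity shows that every $3$-coloring of $H$ assigns distinct colors to the three missing colors at the connectors. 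Hence a coloring of $G_Y$ extends to $G$ and conversely, so $G_Y$ is class $2$ and critical.
\item If $X$ contains degree-$2$ vertices of $G$, we instead contract $Y$, which must then carry no degree-$2$ vertex, or else $G$ decomposes further.
\end{itemize}
Showing that one of the two sides is a class $1$ gadget with the right degree pattern, while the contracted side remains critical and nontrivial, is the most delicate bookkeeping here.

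\textbf{Step 3 (cyclically 4-edge-connected case, (d)/(e)).} If no such reductions apply, then $G$ is triangle-free, $3$-edge-connected, and cyclically $4$-edge-connected. Let $D$ be its set of degree-$2$ vertices, which is independent. If $n$ is even, $|D|$ is even. We add a matching or suitable edges among the vertices of $D$ to obtain a cubic graph $S\supseteq G$ of order $n$. If $n$ is odd, we add one new vertex $w$ adjacent to three vertices of $D$ and pair the rest, giving order $n+1$. Then $S$ is class $2$ because it contains the class-$2$ graph $G$. It remains to choose the added edges so that $S$ stays triangle-free and cyclically $4$-edge-connected. A cyclic $\le3$-cut of $S$ would restrict to a small cut in $G$, which the previous steps exclude, because added edges cross between vertices of the same connectivity class.

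\textbf{Main obstacle.} The hardest part is this choice of completion, together with the exact criticality transfer in Step 2. For the completion, I would first try choosing the pairing of $D$ to maximize distances, then argue by contradiction with a minimal cyclic cut. I expect Step 3 to be the main obstacle.
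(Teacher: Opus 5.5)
Your decomposition order (triangles, essential $2$-edge-cuts, cyclic $3$-edge-cuts with a side free of degree-$2$ vertices, then snark completion) is the paper's. The final case, however, has a genuine gap. You are right that Step~3 is the obstacle, but not for the reason you give.

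In the third bullet of Step~2 you claim that if the minimal side $X$ contains degree-$2$ vertices of $G$, then $Y$ contains none, ``or else $G$ decomposes further.'' Nothing supports this. A cyclic $3$-edge-cut can have degree-$2$ vertices of $G$ on both sides. Then neither side can be contracted: a side containing $k\ge 1$ such vertices induces a subgraph of deficiency $3+k\ne 3$, so Lemma~\ref{lem:contract-deficiency} does not apply, and the cut gives neither (a) nor (c). Hence your Step~3 assumption that $G$ is cyclically $4$-edge-connected is unfounded. Likewise, ``a cyclic $\le 3$-cut of $S$ would restrict to a small cut in $G$, which the previous steps exclude'' is false: the restriction can be exactly such a surviving cut. (Also, $G$ cannot be $3$-edge-connected at all, since it has degree-$2$ vertices; the right hypothesis is cyclic $3$-edge-connectivity.)

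This residual case is the core of the theorem. The pairing, and for odd $n$ the triple joined to the new vertex, must be chosen so that \emph{every} cyclic $3$-edge-cut of $G$ is crossed by a completion edge. Since these cuts may cross one another, ``maximizing distances'' gives no such guarantee. The paper's Lemma~\ref{lem:snark-reduction} handles this in three steps:
\begin{enumerate}
\item It smooths the degree-$2$ vertices that are ends of cut edges and shows that the cyclic $3$-edge-cuts then become laminar, via the corner-counting argument of Lemma~\ref{lem:smoothing-laminar}.
\item It builds the resulting cut tree with degree-$2$ vertices as marks; every tree edge separates marks because both sides of every cyclic $3$-edge-cut contain degree-$2$ vertices.
\item It proves a pairing lemma for marked trees, in which every tree edge separates some pair or splits the special triple.
\end{enumerate}
It also needs the fact that an acyclic side of an edge-cut of size at most $3$ in $G$ contains at most one degree-$2$ vertex. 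This guarantees that a small cyclic cut of $\widehat G$ restricts to a \emph{cyclic} cut of $G$.

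Step~1 also has a gap. Which side gets the new edge and which gets the new vertex is forced, not chosen: the side whose two boundary edges always receive distinct colors must get the edge $a_1a_2$. ``Parity counting'' does not determine this when a side contains degree-$2$ vertices; what works is that the two sides have nonempty, disjoint sets of states. The forced edge may already exist. For example, a triangle side containing a degree-$2$ vertex of $G$ has only different-state colorings, so your pieces can be multigraphs rather than $3$-critical graphs. The paper prevents this by contracting independent triangles first. It then chooses, among essential $2$-edge-cuts, one maximizing the same-state side, which forces $u_2v_2\notin E(H_2)$ (Theorem~\ref{lem:Hajos2}). You must also restrict to essential $2$-edge-cuts so that the cut is a matching (Lemma~\ref{lem:cyc-2-3-cut-property}).

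A minor point: in the Meredith bullet, $H$ being class~$1$ should not be a hypothesis. It is automatic, by restricting a $3$-coloring of $G-g$ for a cut edge $g$.
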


As an application of Theorem~\ref{thm:characterization} and the fact that no snark has order less than 18, we give a theoretical proof of the following result, which was verified computationally in~\cite{BrinkmannSteffen1997}.

\begin{coro}\label{thm:3-critical-order-below-22}
There is no 3-critical graph of even order less than 18. 
\end{coro}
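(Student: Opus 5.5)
We argue by minimal counterexample. Let $G$ be a $3$-critical graph of even order $n<18$ with $n$ smallest possible. An even-order $3$-critical graph cannot be trivial: a trivial one is overfull and therefore has odd order. So $G$ is nontrivial and Theorem~\ref{thm:characterization} applies. We rule out each of its five outcomes in turn.

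\emph{Case (e).} Here $G$ comes from a snark of order $n<18$. No snark has order less than $18$, so this case cannot occur.

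\emph{Case (d).} This case requires $n$ to be odd, so it does not apply.

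\emph{Case (a).} A vertex-blowup replaces one vertex by a triangle, so $G$ comes from a nontrivial $3$-critical graph of order $n-2$. That order is even and smaller than $18$, contradicting the minimality of $n$.

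\emph{Case (c).} Here $G$ is a Meredith-type extension of a nontrivial $3$-critical graph $G_0$ through a class~$1$ gadget $H$. Then $|V(G)|=|V(G_0)|-1+|V(H)|$. The gadget $H$ has three vertices of degree $2$ and all other vertices of degree $3$, so its degree sum is $3|V(H)|-3$. This sum must be even, so $|V(H)|$ is odd. It follows that $|V(G_0)|\equiv |V(G)| \pmod 2$, so $G_0$ has even order. We also need $|V(G_0)|<n$, that is, $|V(H)|\ge 3$. This holds because a simple graph with three degree-$2$ vertices has at least three vertices. So $G_0$ is a smaller even-order $3$-critical graph, contradicting minimality.

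\emph{Case (b).} Here $G$ is a Haj\'os-join of $3$-critical graphs $G_1,G_2$. Identifying $u_1$ with $u_2$ gives $|V(G)|=|V(G_1)|+|V(G_2)|-1$. Since $n$ is even, exactly one of $G_1,G_2$ has even order, and it has order at most $n-1$, because the other part has at least $2$ vertices. So again we obtain a smaller even-order $3$-critical graph, contradicting minimality.

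To start the induction we also need that no $3$-critical graph has order $2$ or $4$, which is immediate from the known small cases or from the definitions. The order bookkeeping in (c) and (b) is where care is required. Specifically, we should confirm that the graphs produced in Theorem~\ref{thm:characterization} are genuinely $3$-critical, which the statement of the theorem asserts, and that they are strictly smaller; this is a simple vertex count. I expect the parity argument in (c) to be the main point to get right: it depends on the gadget's degree sequence forcing $|V(H)|$ to be odd.
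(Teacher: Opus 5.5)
Your handling of the trivial case and of outcomes (a)--(d) of Theorem~\ref{thm:characterization} is correct and matches the paper's parity bookkeeping. The gap is in case (e), where you claim that no snark has order less than $18$. That claim is false. The Petersen graph $P$ is cubic, class~$2$, has girth $5$ and is cyclically $5$-edge-connected, so it is a snark of order $10$ under the paper's definition. The correct fact, which the paper's proof uses (citing Preissmann), is that $P$ is the \emph{only} snark of order less than $18$; the next ones are the Blanu\v{s}a snarks of order $18$. (The phrase in the paper's introduction is loose on this point.) So outcome (e) with $n=10$ and snark $P$ is not excluded by your argument. The paper disposes of exactly this case separately, by noting that the only $3$-critical subgraph of $P$ is $P-v$, which has odd order.

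You can close the gap with tools you already have. Since $\delta(G)\ge 2$ and $G$ is a spanning subgraph of the cubic graph $P$, we have $G=P-M$ for a matching $M$ whose ends are exactly the degree-$2$ vertices of $G$. Because $G$ is nontrivial it has at least three such vertices, so $|M|\ge 2$. Take distinct $ab,cd\in M$. Since $P$ has diameter $2$ and girth $5$, either $ac\in E(P)$ or $a$ and $c$ have a unique common neighbor $w$ in $P$.

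If $ac\in E(P)$, then $ac\notin M$, so $a$ and $c$ are adjacent degree-$2$ vertices of $G$. Otherwise there are three possibilities. If $w\notin\{b,d\}$, then $aw,cw\notin M$, so $w$ is a common neighbor of $a$ and $c$ in $G$. If $w=b$, then $bc\in E(G)$. If $w=d$, then $ad\in E(G)$.

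Every outcome contradicts Vizing's Adjacency Lemma, which forbids two degree-$2$ vertices of a $3$-critical graph from being adjacent or sharing a neighbor. With this added, your proof is complete. Your remark about a base case at orders $2$ and $4$ is unnecessary in a minimal-counterexample argument, but it does no harm.
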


The remainder of this paper is organized as follows. In
Section~\ref{S:operations}, we introduce some preliminaries and study the
properties of the three types of operations, in particular those of the
Meredith-type extension. In Section~\ref{S:characterization}, we prove
Theorem~\ref{thm:characterization} and Corollary~\ref{thm:3-critical-order-below-22}. In the final section, we discuss
some algorithmic aspects of our enumeration.

\section{Preliminaries and Properties of the Three Operations}\label{S:operations}

\subsection{Preliminaries}

Let $G$ be a graph. We use $V(G)$ and $E(G)$ to denote the vertex set
	and edge set of $G$, respectively, and write $e(G)=|E(G)|$. 
	For $v\in V(G)$, let $N_G(v)$ be the set of neighbors of $v$ in $G$. The degree
	of $v$ in $G$, denoted by $d_G(v)$, is the number of edges of $G$ incident with
	$v$.

	Let $V_1,V_2\subseteq V(G)$ be disjoint vertex sets. We write $E_G(V_1,V_2)$ for the set of edges of $G$ with one endpoint in $V_1$ and the other in $V_2$, and let $e_G(V_1,V_2)=|E_G(V_1,V_2)|$. If $V_1=\{v\}$, we write $E_G(v,V_2)$ and $e_G(v,V_2)$ instead of $E_G(\{v\},V_2)$ and $e_G(\{v\},V_2)$. We also write $G[V_1,V_2]$ for the bipartite subgraph of $G$ with vertex set $V_1\cup V_2$ and edge set $E_G(V_1,V_2)$.

Let $S\subseteq V(G)$ and $v\in V(G)$. We write $N_G(v,S)=N_G(v)\cap S$,  and $d_G(v,S)=e_G(v,S\setminus\{v\})$.  The subgraph of $G$ induced by $S$ is denoted by $G[S]$. We write $G-S=G[V(G)\setminus S]$, and use the abbreviation $G-x$ for $G-\{x\}$. We also write $e_G(S)=e(G[S])$.

	If $F\subseteq E(G)$, then $G-F$ denotes the graph obtained from $G$ by deleting all edges in $F$, and $G[F]$ denotes the subgraph of $G$ induced by the edge set $F$.  We write $V(F)$ for the set of vertices covered by the edges in $F$.
	For a set $F \subseteq E(\overline{G})$ of edges, let $G+F$ denote the graph obtained from $G$ by adding all edges in $F$. 
	We also use the abbreviation $G+e$ for $G+\{e\}$.

Let $k\in\mathbb{N}$, and let $\CC^k(G)$ denote the set of all edge $k$-colorings of a multigraph $G$. For $\varphi\in \CC^k(G)$ and $v\in V(G)$, the set of colors \emph{present} at $v$ is
	\[
	\varphi(v)=\{\varphi(e): \text{$e\in E(G)$ is incident with $v$}\},
	\]
	and the set of colors \emph{missing} at $v$ is $\pbar(v)=[k]\setminus \varphi(v)$. For $X\subseteq V(G)$ and $i\in [k]$, define $\pbar_X^{-1}(i)=\{v\in X:i\in \pbar(v)\}$. We write simply $\pbar^{-1}(i)$ for $\pbar_{V(G)}^{-1}(i)$.

\begin{lem}[Vizing's Adjacency Lemma (VAL), \cite{Vizing-2-classes}]\label{lem:val}
	Let $G$ be a class~2 graph with maximum degree $\Delta$. If $e=xy$ is a critical edge of $G$, then $x$ has at least $\Delta-d_G(y)+1$ neighbors of degree $\Delta$ in $V(G)\setminus\{y\}$. As a consequence, $d_G(x)+d_G(y)\ge \Delta+2$.
\end{lem}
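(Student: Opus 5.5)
We prove the lemma with the classical fan and Kempe-chain technique. Fix a critical edge $e=xy$. Since $G$ is class~2 and $e$ is critical, $G-e$ has an edge $\Delta$-coloring $\varphi\in\CC^\Delta(G-e)$. In $G-e$ we have $|\pbar(x)|=\Delta-d_G(x)+1\ge 1$ and $|\pbar(y)|=\Delta-d_G(y)+1\ge 1$. Moreover $\pbar(x)\cap\pbar(y)=\emptyset$, since otherwise a common missing color could be put on $e$ to give an edge $\Delta$-coloring of $G$.

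For each color $\alpha\in\pbar(y)$, the color $\alpha$ is therefore present at $x$ on an edge $xz_\alpha$ with $z_\alpha\ne y$. Distinct $\alpha$ give distinct $z_\alpha$. So it suffices to prove the following claim: \emph{every such $z_\alpha$ has degree $\Delta$.} This produces $|\pbar(y)|=\Delta-d_G(y)+1$ distinct neighbors of $x$ of degree $\Delta$ in $V(G)\setminus\{y\}$.

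The consequence then follows at once. The claim gives $d_G(x)-1\ge \Delta-d_G(y)+1$, because $x$ has at least that many neighbors other than $y$. Rearranging gives $d_G(x)+d_G(y)\ge\Delta+2$.

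To prove the claim, suppose $d_G(z)<\Delta$ for $z=z_\alpha$, so that $z$ misses some color $\beta$, and pick $\gamma\in\pbar(x)$. The pair $(y,z)$ is a Vizing fan at $x$: the edge $xz$ has color $\alpha\in\pbar(y)$. We first establish the standard fan facts.
\begin{enumerate}[(i)]
\item The sets $\pbar(x)$, $\pbar(y)$ and $\pbar(z)$ are pairwise disjoint.
\item For $\gamma\in\pbar(x)$ and $\beta\in\pbar(z)$, the vertices $x$ and $z$ are the two ends of the same $(\gamma,\beta)$-Kempe path.
\end{enumerate}
Each fact is proved by contradiction. If it fails, we swap colors along a suitable Kempe chain so that a common missing color appears at two fan vertices. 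We then "shift" the fan: recolor $xy$ with $\alpha$ and uncolor $xz$, which yields another coloring of $G$ minus one edge. After this shift the uncolored edge can be colored, giving an edge $\Delta$-coloring of $G$, a contradiction.

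We now use (ii). Let $P$ be the $(\gamma,\beta)$-path joining $x$ and $z$. Perform the shift: color $xy$ by $\alpha$ and uncolor $xz$. In the new coloring $x$ still misses $\gamma$, and $z$ misses both $\beta$ and $\alpha$. The path $P$ uses only colors $\gamma$ and $\beta$, so it is unaffected by the shift, except that its end at $x$ is unchanged. Hence the $(\gamma,\beta)$-component containing $z$ is still the path $P$, whose other end is $x$.

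We now apply the fan-type argument to the single edge $xz$. Either $\gamma$ or $\beta$ is missing at both ends after the Kempe swap on the appropriate component, or the parity of $P$ gives a contradiction: $P$ starts at $x$ with color $\beta$ and ends at $z$ with color $\gamma$. Swapping the $(\gamma,\beta)$-colors on a component not containing $x$, or using the parity of $P$ together with the old fan, lets us color $xz$ with $\gamma$ or $\beta$. This again yields an edge $\Delta$-coloring of $G$, which is impossible.

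The main obstacle is this last step. We must check carefully which Kempe component is swapped and that $\pbar(x)$ is not destroyed in the process. I would handle it by running the standard Vizing recoloring on the two-vertex fan $(y,z)$ in full, rather than relying on the shortcut described above.
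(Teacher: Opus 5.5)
\textbf{The gap.} Your reduction to the claim that \emph{every} $z_\alpha$ with $\alpha\in\pbar(y)$ has degree $\Delta$ is where the argument fails. That claim is false for an arbitrary $\varphi$, so the ``main obstacle'' you flag at the end cannot be overcome by more careful Kempe bookkeeping.

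Here is a counterexample. Let $G$ be the Petersen graph minus $u_0$, with outer cycle $u_0u_1u_2u_3u_4$, spokes $u_iv_i$, and inner edges $v_iv_{i+2}$ (indices mod $5$). This graph is $3$-critical. Take $x=u_2$ and $y=v_2$, and color $G-xy$ as follows:
\begin{itemize}
\item color $3$ on $u_1u_2,u_3v_3,v_1v_4$;
\item color $1$ on $u_2u_3,u_4v_4,v_0v_2,v_1v_3$;
\item color $2$ on $u_3u_4,u_1v_1,v_2v_4,v_0v_3$.
\end{itemize}
This coloring is proper, with $\pbar(x)=\{2\}$ and $\pbar(y)=\{3\}$. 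The $3$-edge at $x$ goes to $z=u_1$, which has degree $2$ and $\pbar(z)=\{1\}$. Your facts (i) and (ii) both hold here: the $(2,1)$-path from $u_2$ ends at $u_1$. After your shift, both the $(2,1)$-path and the $(2,3)$-path starting at $x$ end at $z$, so $xz$ cannot be recolored. There is no contradiction to be found, and your derivation of the consequence $d_G(x)+d_G(y)\ge\Delta+2$ inherits the gap.

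\textbf{The fix.} The lemma needs a counting argument over a \emph{maximal} fan, not a statement about each individual $z_\alpha$.
\begin{enumerate}
\item Grow $(y)$ into a maximal multi-fan $F=(x;y_0=y,y_1,\dots,y_p)$, where each edge $xy_i$ with $i\ge1$ carries a color missing at some earlier $y_j$.
\item The standard fan lemma shows that $\pbar(x),\pbar(y_0),\dots,\pbar(y_p)$ are pairwise disjoint. This is your fact (i), proved for all fan vertices by the same swap-and-shift argument, inducting along the fan.
\item By maximality, every color in $\bigcup_i\pbar(y_i)$ appears at $x$ on one of the $p$ fan edges $xy_1,\dots,xy_p$. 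These edges have distinct colors.
\item Disjointness therefore gives $(\Delta-d_G(y)+1)+\sum_{i=1}^{p}(\Delta-d_G(y_i))\le p$.
\item Each $y_i$ with $d_G(y_i)<\Delta$ contributes at least $1$ to the sum. Hence at least $\Delta-d_G(y)+1$ of $y_1,\dots,y_p$ have degree $\Delta$.
\end{enumerate}
In the example above, the maximal fan is $(v_2,u_1,u_3)$. The full-degree neighbor it produces is $u_3$, reached through the color $1\in\pbar(u_1)$ rather than through a color of $\pbar(y)$.
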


    Given an edge-coloring of $G$ and a color $i$, the edges colored $i$ form a
matching, and hence the number of vertices not incident with an edge of color
$i$ has the same parity as $|V(G)|$. We will use the following Parity Lemma of
Gr\"unewald and Steffen~\cite[Lemma~2.1]{MR2028248}.

\begin{lem}[Parity Lemma]\label{lem:parity-lemma}
	Let $G$ be a multigraph, and let $\varphi \in \CC^k(G)$  for some integer
	$k\ge \Delta(G)$. Then $|\pbar^{-1}(i)|\equiv |V(G)| \pmod 2$ for every
	color $i\in [k]$.
\end{lem}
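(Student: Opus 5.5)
The plan is to count, for a fixed color $i\in[k]$, the vertices that see color $i$ and subtract them from $|V(G)|$; no appeal to Lemma~\ref{lem:val} or to the specific structure of $G$ is needed. Fix $\varphi\in\CC^k(G)$ and $i\in[k]$, and let $M_i=\varphi^{-1}(i)$ be the color class of $i$. Since $\varphi$ is a proper edge coloring, $M_i$ is a matching in $G$. Here we take the multigraph to be loopless, as is implicit in the notion of a proper edge coloring with a single color per edge-end. So no two edges of $M_i$ share an end, and each edge of $M_i$ has exactly two distinct ends.

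The key step is the identity
\[
V(G)\setminus \pbar^{-1}(i) \;=\; V(M_i),
\]
together with $|V(M_i)| = 2|M_i|$. The identity holds because a vertex $v$ has $i\in\varphi(v)$ exactly when some edge of color $i$ is incident with $v$, that is, exactly when $v\in V(M_i)$. The count $|V(M_i)|=2|M_i|$ holds because the edges of $M_i$ are pairwise disjoint and each covers two vertices. Parallel edges cause no trouble: two parallel edges share both ends, so they cannot both lie in $M_i$.

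Combining the two facts gives
\[
|\pbar^{-1}(i)| = |V(G)| - 2|M_i| \equiv |V(G)| \pmod 2 .
\]
The hypothesis $k\ge\Delta(G)$ only ensures that $\CC^k(G)$ may be nonempty; it plays no role in the parity count. I do not expect any step to be a genuine obstacle. The one point worth stating explicitly is looplessness, since a loop colored $i$ would cover a single vertex and break the factor of $2$.
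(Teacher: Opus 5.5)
Your proof is correct and is essentially the argument the paper gives in its one-sentence remark just before the lemma, which it otherwise cites from Gr\"unewald and Steffen: each color class $\varphi^{-1}(i)$ is a matching, so the number of vertices missing color $i$ is $|V(G)|-2|\varphi^{-1}(i)|$. Making looplessness explicit is a reasonable clarification that the paper leaves implicit.
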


Let $G$ be a graph and let 
 $F\subseteq E(G)$. We call $F$ a \emph{disconnecting set} if
$G-F$ has more components than $G$.  Furthermore, if there exists a set
$S\subseteq V(G)$ such that $F=E_G(S,V(G)\setminus S)$, 
then $F$ is called an \emph{edge-cut}. In this case, we also write
$F=\delta_G(S)$, and we omit the subscript $G$ when the graph is clear
from the context. It is easy to verify that every inclusion-minimal
disconnecting set is an edge-cut. 
A disconnecting set $F$ is \emph{essential} if at least two components of $G-F$ each contain an edge, and is \emph{cyclic} if at least two components of $G-F$ each contain a cycle.

\begin{lem}\label{lem:cyc-2-3-cut-property}
The following hold.
\begin{enumerate}[(1)]
    \item Let $G$ be a connected essentially $2$-edge-connected graph, and
    let $F\subseteq E(G)$ be an essential disconnecting set of size $2$.
    Then $G-F$ has exactly two components. In particular, $F$ is an
    edge-cut. Moreover, if $G$ is subcubic and $2$-edge-connected, then
    $F$ is a matching.

    \item Let $G$ be a connected cyclically $3$-edge-connected graph, and
    let $F\subseteq E(G)$ be a cyclic disconnecting set of size $3$.
    Then $G-F$ has exactly two components. In particular, $F$ is an
    edge-cut. Moreover, if $G$ is subcubic, then $F$ is a matching.
\end{enumerate}
\end{lem}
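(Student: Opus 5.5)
We prove the two parts separately; each follows the same pattern. First use connectivity to control the number of components of $G-F$. Then use the relevant edge-connectivity hypothesis, together with the fact that $F$ has the smallest size making it essential or cyclic, to force the stated structure.

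For (1), since $G$ is connected, removing the two edges of $F$ one at a time raises the number of components by at most one each. So $G-F$ has at most three components. Suppose it had three. Then $G-e$ is disconnected for some $e\in F$, and removing the other edge $f$ splits one side again. Since $F$ is essential, two components of $G-F$ contain edges; call them $C_1,C_2$. The edge $e$ is a bridge of $G$, and the two sides of $G-e$ each contain an edge.
\begin{itemize}
\item If the two sides of $G-e$ contain $C_1$ and $C_2$ respectively, we are done directly.
\item If $C_1$ and $C_2$ lie on the same side, then that side contains $f$ and a component with an edge. In that case we argue instead that the other side of $G-e$ contains an edge: $e$ has an endpoint there, and a component consisting of an isolated vertex would make $e$ a pendant edge.
\end{itemize}
I will handle this case analysis carefully, because this is where "essential" must be used. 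The cleanest route is probably the following. Take an essential bridge if one exists; it alone is an essential disconnecting set of size $1$, contradicting essential $2$-edge-connectivity. Otherwise every bridge is pendant, i.e.\ one of its sides is a single vertex. Then in the three-component case, one checks that some single edge of $F$ already separates two edge-containing pieces. Hence $G-F$ has exactly two components, with vertex sets $S$ and $V(G)\setminus S$. Both edges of $F$ go between them, since an edge inside a component would not disconnect anything. So $F=\delta(S)$.

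For the matching claim, assume $G$ is subcubic and $2$-edge-connected, and suppose $F=\{ux,uy\}$ shares the vertex $u$. Let $u$ lie in $S$. Then $u$ has at most one further edge, and it lies inside $S$. Since $G[S]$ contains an edge, $S\neq\{u\}$, so $u$ has exactly one neighbour $w$ in $S$. Then $uw$ is a bridge of $G$, separating $S\setminus\{u\}$ from the rest, which contradicts $2$-edge-connectivity.

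For (2), the argument is identical with three edges and "cyclic" in place of "essential". Removing the edges of $F$ one by one gives at most four components. If there are more than two, some proper subset of $F$, of size at most $2$, already separates two cycle-containing components. That contradicts cyclic $3$-edge-connectivity, once we choose the subset cleverly using a minimal sub-disconnecting set, which is an edge-cut as noted before the lemma. This choice is the main obstacle.

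For the matching claim in (2), suppose two edges of $F$ share an endpoint $u\in S$. Then $u$ has at most one further edge, inside $S$. Since $G[S]$ contains a cycle, $S\setminus\{u\}$ still contains that cycle, because $u$ has degree $1$ in $G[S]$. So $\delta(S\setminus\{u\})$ consists of the third edge of $F$ plus the edge at $u$. This gives a cyclic cut of size $2$, a contradiction.
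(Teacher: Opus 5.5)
\textbf{There is a genuine gap at the heart of part (2).} You never prove that $G-F$ has exactly two components. You also never prove that every edge of $F$ joins those two components, which is what $F=\delta_G(S)$ requires. You reduce the question to finding a proper subset of $F$ that is a cyclic disconnecting set of size at most $2$, but then leave the choice of that subset as ``the main obstacle.'' Appealing to a minimal disconnecting subset of $F$ does not settle it: such a subset can be a single bridge cutting off an acyclic piece, so it need not be cyclic.

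Part (1) has similar soft spots. Its three-component case is only asserted (``one checks that\dots''). You justify that both edges of $F$ cross between the two components by saying that ``an edge inside a component would not disconnect anything.'' The real reason is that the other edge of $F$ would then be an essential bridge, so the essential $2$-edge-connectivity hypothesis is needed there as well.

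Your matching arguments coincide with the paper's and are fine once $F=\delta_G(S)$ is known. In (2) you should also handle the case where all three edges of $F$ meet at $u$; then $u$ would be isolated in the connected graph $G[S]$. The paper's count $|\delta_G(S\setminus\{u\})|=3-r+d_G(u,S)\le 2$, with $r\ge 2$, covers both cases at once.

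\textbf{The missing idea is to choose the sub-cut directly}, rather than peeling off edges of $F$ one at a time. Let $C$ be a component of $G-F$ containing a cycle (in (1), an edge).
\begin{enumerate}
\item[(i)] Every edge leaving $V(C)$ lies in $F$, so $\delta_G(V(C))\subseteq F$.
\item[(ii)] A second cycle-containing (resp.\ edge-containing) component lies outside $V(C)$, so $\delta_G(V(C))$ is a cyclic (resp.\ essential) edge-cut.
\item[(iii)] The connectivity hypothesis gives $|\delta_G(V(C))|\ge |F|$, hence $\delta_G(V(C))=F$.
\item[(iv)] The same argument for that second component $C'$ gives $\delta_G(V(C'))=F$. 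Hence every edge of $F$ joins $C$ to $C'$, and connectivity of $G$ rules out a third component.
\end{enumerate}
This treats (1) and (2) uniformly and shows at once that $F$ is an edge-cut. It also removes the need for any case analysis on bridges or on up to four components.
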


\begin{proof}
We first prove (1). Since $F$ is an essential disconnecting set, at least
two components of $G-F$ contain edges. Let $C$ be one such component.
Since another component of $G-F$ contains an edge, the cut
$\delta_G(V(C))$ is essential. Moreover, $\delta_G(V(C))\subseteq F$, 
because every edge from $C$ to $V(G)\setminus V(C)$ must have been
deleted by $F$. Since $G$ is essentially $2$-edge-connected,
\[
2\le |\delta_G(V(C))|\le |F|=2.
\]
Thus $\delta_G(V(C))=F$. In particular, every edge of $F$ has one end in
$C$ and one end outside $C$.

Let $C'$ be another component of $G-F$ containing an edge. The same
argument gives $\delta_G(V(C'))=F$. Hence every edge of $F$ joins $C$ to
$C'$. Since $G$ is connected, there can be no third component of $G-F$.
Therefore $G-F$ has exactly two components, and $F$ is an edge-cut.

Now assume further that $G$ is subcubic and $2$-edge-connected. Let the
two components of $G-F$ have vertex sets $S$ and $V(G)\setminus S$, so
that $F=\delta_G(S)$. Suppose, for a contradiction, that the two edges of
$F$ have a common end $u\in S$. Since $G$ is subcubic, $u$ has at most
one neighbor in $S$. As $G$ is connected and $S\setminus\{u\}$ is
nonempty, $u$ has exactly one neighbor in $S$, say $u'$. Then the only
edge from $S\setminus\{u\}$ to $V(G)\setminus (S\setminus\{u\})$ is
$uu'$. Hence $uu'$ is a bridge of $G$, contradicting that $G$ is
$2$-edge-connected. The same argument applies if the common end lies in
$V(G)\setminus S$. Therefore $F$ is a matching. This proves (1).

We now prove (2). Since $F$ is a cyclic disconnecting set, at least two
components of $G-F$ contain cycles. Let $C$ be one such component. Since
another component of $G-F$ contains a cycle, the cut $\delta_G(V(C))$ is
cyclic. Moreover, $\delta_G(V(C))\subseteq F$. 
Since $G$ is cyclically $3$-edge-connected,
\[
3\le |\delta_G(V(C))|\le |F|=3.
\]
Thus $\delta_G(V(C))=F$. In particular, every edge of $F$ has one end in
$C$ and one end outside $C$.

Let $C'$ be another component of $G-F$ containing a cycle. The same
argument gives $\delta_G(V(C'))=F$. Hence every edge of $F$ joins $C$ to
$C'$. Since $G$ is connected, there can be no third component of $G-F$.
Therefore $G-F$ has exactly two components, and $F$ is an edge-cut.

It remains to prove the matching statement under the additional
assumption that $G$ is subcubic. Let the two components of $G-F$ have
vertex sets $S$ and $V(G)\setminus S$, so that $F=\delta_G(S)$. Suppose,
for a contradiction, that two edges of $F$ have a common end $u\in S$.
Let $r=d_G(u,V(G)\setminus S)$. 
Then $r\ge 2$. Since $G$ is subcubic, $d_G(u,S)\le 1$. Hence no cycle of
$G[S]$ uses $u$.

Put $S'=S\setminus\{u\}$. Then
\[
|\delta_G(S')|
=
|\delta_G(S)|-r+d_G(u,S)
\le 3-2+1
=2.
\]
Both sides of this new cut still contain cycles: the side $S'$ contains
a cycle because $G[S]$ contains a cycle and no cycle of $G[S]$ uses $u$,
while the other side already contained a cycle before $u$ was moved.
Thus $\delta_G(S')$ is a cyclic edge-cut of size less than $3$,
contradicting that $G$ is cyclically $3$-edge-connected.

The same argument applies if two edges of $F$ have a common end in
$V(G)\setminus S$. Therefore no two edges of $F$ share an end, and so
$F$ is a matching. This proves (2).
\end{proof}

By Lemma~\ref{lem:cyc-2-3-cut-property},  we adopt the
following terminology for the remainder of the paper. If $G$ is subcubic and
$2$-edge-connected, we call every essential disconnecting set of size $2$ an
\emph{essential $2$-edge-cut}. If $G$ is subcubic, $2$-edge-connected, and
cyclically $3$-edge-connected, we call every cyclic disconnecting set of size
$3$ a \emph{cyclic $3$-edge-cut}. Note also that every $3$-critical graph is
$2$-edge-connected, a fact we use frequently.

\subsection{Properties of the Three Operations}

  Let $G$ be a  graph.   We call
\[
\df(G)=\sum_{v\in V(G)}(\Delta(G)-d_G(v))
\]
the \emph{deficiency} of $G$.  The following result was due
to Gr\"unewald and Steffen from 1999~\cite{GruenewaldSteffen}. 

\begin{lem}[{\cite[Lemma 2.3]{GruenewaldSteffen}}]\label{lem:contract-deficiency}
Let $G$ be a $k$-critical multigraph, where $k\ge 2$, and let $H$ be an
induced proper submultigraph of odd order. If $\df(H)=k$, then the multigraph
  obtained from $G$ by contracting $H$ to a single vertex $v$ is
$k$-critical.
\end{lem}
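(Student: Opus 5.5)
The plan is to prove both halves of criticality, class~2 and edge-minimality, from one structural observation about $H$: in every edge $k$-coloring of $H$, each color is missing at exactly one vertex of $H$. Let $G'$ be the contracted multigraph and $v$ the new vertex. Since $H$ is induced, no loops arise. Every edge of $G'$ corresponds to an edge of $G$ with at most one end in $H$. Since $G$ is connected, so is $G'$.

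\textbf{Key observation.} Let $\psi$ be any edge $k$-coloring of $H$. By the Parity Lemma (Lemma~\ref{lem:parity-lemma}) applied to $H$, which has odd order, every color $i\in[k]$ is missing at an odd number of vertices of $H$, hence at least one. The total number of missing incidences is
\[
\sum_{u\in V(H)}(k-d_H(u))=\df(H)=k .
\]
Therefore each color is missing at exactly one vertex of $H$, and the sets $\overline{\psi}(u)$, $u\in V(H)$, partition $[k]$. A consequence is the degree bound at $v$:
\[
d_{G'}(v)=\sum_{u\in V(H)}(d_G(u)-d_H(u))\le \sum_{u\in V(H)}(k-d_H(u))=k ,
\]
so $\Delta(G')\le k$.

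\textbf{$G'$ is not $k$-colorable.} Suppose $\varphi'$ is an edge $k$-coloring of $G'$. Since $H$ is a proper induced subgraph and $G$ is connected, some edge of $G$ lies outside $E(H)$, and $G$ minus that edge is $k$-colorable. Hence $H$ has a $k$-coloring $\psi$. The boundary edges of $H$ are exactly the edges at $v$, so they carry pairwise distinct $\varphi'$-colors. At each $u\in V(H)$ there are $d_G(u)-d_H(u)\le |\overline{\psi}(u)|$ boundary edges. Because the sets $\overline{\psi}(u)$ partition $[k]$, we can build a bijection $\pi:[k]\to[k]$ block by block: for each $u$, send some of the colors in $\overline{\psi}(u)$ onto the $\varphi'$-colors of the boundary edges at $u$, then complete $\pi$ arbitrarily. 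Recoloring $H$ by $\pi\circ\psi$ and keeping $\varphi'$ elsewhere gives an edge $k$-coloring of $G$, which is a contradiction. Thus $\chi'(G')\ge k+1$. Since $\chi'(G')\le\Delta(G')+1$, this also forces $\Delta(G')=k$. The step I expect to need the most care is exactly this block-by-block construction of $\pi$ from the partition property; the rest is bookkeeping.

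\textbf{Edge-minimality.} Let $e'\in E(G')$, corresponding to $e\in E(G)\setminus E(H)$, and let $\varphi$ be an edge $k$-coloring of $G-e$. Its restriction to $H$ is a $k$-coloring of $H$, so each color is missing at exactly one vertex of $H$. A boundary edge at $u$ with color $c$ forces $c\in\overline{\varphi|_H}(u)$. Hence distinct boundary edges of $G-e$ receive distinct colors. Transferring $\varphi$ to $G'-e'$, with the boundary edges now incident to $v$, therefore gives a proper edge $k$-coloring. Thus every edge of $G'$ is critical, and $G'$ is $k$-critical.
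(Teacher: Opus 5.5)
The paper gives no proof of this lemma; it quotes it from Gr\"unewald and Steffen. So your argument has to be judged on its own, and it is correct except for one step, the one that gets $\Delta(G')=k$. The following parts are all correct:
\begin{itemize}
\item the parity-plus-counting observation that the sets $\overline{\psi}(u)$, $u\in V(H)$, partition $[k]$;
\item the block-by-block permutation $\pi$ that glues a $k$-coloring of $G'$ to one of $H$;
\item the transfer of a $k$-coloring of $G-e$ to $G'-e'$.
\end{itemize}
Together they show that $G'$ is connected, $\Delta(G')\le k$, $\chi'(G')\ge k+1$, and every $G'-e'$ is $k$-colorable. The last fact already gives $\chi'(G')\le k+1$ without appealing to Vizing.

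The gap is the sentence ``Since $\chi'(G')\le\Delta(G')+1$, this also forces $\Delta(G')=k$.'' That inequality is Vizing's theorem for simple graphs. But $G'$ is a multigraph, and contraction can create parallel edges even when $G$ is simple. For multigraphs one only has $\chi'\le\Delta+\mu$, or Shannon's $\chi'\le\lfloor 3\Delta/2\rfloor$.

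The conclusion $\Delta(G')=k$ can actually fail. On vertices $p,q,r,a,b$, take $pq$ with multiplicity $2$, $qr$ with multiplicity $3$, $ab$ with multiplicity $2$, and single edges $pa,pb,ra,rb$. This $G$ has $\Delta(G)=5$ and $11$ edges on $5$ vertices, so $\chi'(G)\ge 6$. A routine check shows that $G-e$ splits into five matchings of size $2$ for every edge $e$; for example, $G-qr$ splits into $\{qr,ab\},\{pq,ab\},\{pq,rb\},\{qr,pa\},\{pb,ra\}$. So $G$ is $5$-critical. The induced subgraph $H=G[\{p,q,r\}]$ has degrees $2,5,3$. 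Since $\Delta(H)=5$, we get $\df(H)=5$ under either deficiency convention. Contracting $H$ gives the triangle with every edge doubled, which has $\chi'=6$ but $\Delta=4<5$.

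So the step cannot be repaired in full generality, and you must restrict it:
\begin{itemize}
\item If $G'$ is simple, Vizing's theorem gives what you want.
\item If $k\le 4$, Shannon's bound works even for multigraphs: $\Delta(G')\le k-1$ would give $\chi'(G')\le\lfloor 3(k-1)/2\rfloor\le k$, contradicting $\chi'(G')\ge k+1$.
\end{itemize}
This covers every use of the lemma in the paper. There $k=3$, and the contraction is of an independent triangle or of a shore of a cyclic $3$-edge-cut that is a matching, so the resulting graph is simple anyway.

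Alternatively, you may read ``$k$-critical multigraph'' as $\Delta\le k$, $\chi'=k+1$, and $\chi'(G-e)=k$ for all $e$, without insisting on $\Delta=k$. Under that reading, deleting the offending sentence leaves a complete proof.
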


The next result shows that a vertex-blowup preserves 3-criticality. 
The forward implication   was listed as Lemma 3
in~\cite{BokalBrinkmannGrunewald2005}, and 
the converse implication follows from Lemma~\ref{lem:contract-deficiency} by taking $H$ as a triangle. 
 \begin{lem}\label{lem:blow-up}
Let $H$ be a subcubic  graph and $G$ a graph obtained from $H$
by  a vertex-blowup. Then $G$ is 3-critical if and only if $H$
is 3-critical.
\end{lem}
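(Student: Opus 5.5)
The converse direction ($G$ 3-critical $\Rightarrow$ $H$ 3-critical) is immediate from Lemma~\ref{lem:contract-deficiency}. Let $T$ be the triangle introduced by the blowup. It is an induced subgraph of $G$ of odd order 3, and each of its vertices has degree $2$ in $T$, so $\df(T)=3=k$. The graph obtained from $G$ by contracting $T$ to a single vertex is exactly $H$. One remark is needed: if $x$ had fewer than three neighbors in $H$, then some triangle vertex would have degree $2$ in $G$. In that case $T$ is still induced with $\df(T)=3$, so the lemma still applies. Also, $T$ is a proper subgraph, since $G$ is connected and $x$ has at least one neighbor (a $3$-critical graph has none of degree less than $2$; if $d_H(x)$ were $0$, then $G$ would be disconnected). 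Hence $H$ is $3$-critical.

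For the forward direction, assume $H$ is $3$-critical. Then $d_H(x)\in\{2,3\}$ by Lemma~\ref{lem:val}, and $\Delta(G)=3$. The key tool is the following observation: in any edge $3$-coloring of $G-T$ together with the edges between $T$ and the rest, the triangle forces the colors on the edges leaving $T$. If $T=\{t_1,t_2,t_3\}$, each $t_i$ sees two triangle colors, and the triangle uses all three colors. So the pendant edge at $t_i$ must get the color of the opposite triangle edge, and the three pendant edges receive pairwise distinct colors. Conversely, any assignment of distinct colors to the pendant edges extends uniquely to $T$. Thus colorings of $G$ (or of $G$ minus an edge outside $T$) correspond to colorings of $H$ (respectively $H$ minus that edge) in which the edges at $x$ are distinct, which is automatic. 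When $d_H(x)=2$, two distinct colors on the two pendant edges extend to $T$ as well.

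First I would show $\chi'(G)=4$: a $3$-coloring of $G$ would restrict and contract to a $3$-coloring of $H$. Next, for criticality I would check three kinds of edges.
\begin{itemize}
\item For an edge $e$ not incident to $T$, a $3$-coloring of $H-e$ lifts to $G-e$ by the correspondence above.
\item For a pendant edge $e=t_iy$, take a $3$-coloring of $H-xy$ and let $\alpha,\beta$ be the colors on the remaining edges at $x$. Color $T-t_i$ and the other pendant edges so that the edge $t_jt_k$ opposite $t_i$ forms a consistent coloring. For example, with pendant colors $\alpha$ at $t_j$ and $\beta$ at $t_k$, set $t_jt_k=\gamma$, $t_it_j=\beta$, $t_it_k=\alpha$; then $t_i$ has degree $2$ and no conflict arises.
\item For a triangle edge $e=t_jt_k$, use a $3$-coloring $\psi$ of $H-xy_i$ where $y_i$ is the neighbor attached at $t_i$. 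Then at $x$ the two other edges have colors $\alpha\neq\beta$, and the missing colors at $y_i$ and $x$ must coincide; otherwise we could recolor, contradicting $\chi'(H)=4$ (indeed, if they differed, a standard Kempe argument would color $H$). Using that common missing information, color $t_iy_i$ with a color missing at $y_i$, and color $t_it_j$, $t_it_k$ appropriately.
\end{itemize}

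The main obstacle is the last case. I expect it to need a short case analysis or a Kempe-chain swap to ensure a suitable color exists. A cleaner alternative I would try first is to cite Lemma 3 of \cite{BokalBrinkmannGrunewald2005} for the forward implication, as the paper indicates.
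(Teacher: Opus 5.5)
\textbf{Comparison with the paper.} Your converse direction is exactly the paper's argument: contract the triangle and apply Lemma~\ref{lem:contract-deficiency}. For the forward direction the paper gives no argument of its own; it cites Lemma~3 of \cite{BokalBrinkmannGrunewald2005}, which is your stated fallback.

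\textbf{The gap in your direct forward argument.} Your self-contained forward argument is fine for $\chi'(G)=4$, for edges not incident with the triangle, and for the pendant edges $t_iy_i$. The triangle-edge case does not go through as written.

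Your assertion that the colors missing at $y_i$ and at $x$ under $\psi$ ``must coincide'' is backwards. Under $\psi$, the vertex $x$ misses exactly the third color $\gamma$. If $y_i$ also missed $\gamma$, then setting $\psi(xy_i)=\gamma$ would $3$-color $H$. So every color missing at $y_i$ lies in $\{\alpha,\beta\}$, and no Kempe argument is involved. You then leave ``color $t_it_j$, $t_it_k$ appropriately'' unverified, which is exactly the step you flagged as the obstacle.

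\textbf{How to close it.} The step is easy and needs only $\alpha\neq\beta$. Put $\alpha$ on $t_jy_j$, $\beta$ on $t_ky_k$, and any color $c$ missing at $y_i$ on $t_iy_i$. Then finish as follows:
\begin{itemize}
\item if $c=\alpha$, use $\beta$ on $t_it_j$ and $\gamma$ on $t_it_k$;
\item if $c=\beta$, use $\gamma$ on $t_it_j$ and $\alpha$ on $t_it_k$;
\item otherwise, use $\beta$ on $t_it_j$ and $\alpha$ on $t_it_k$.
\end{itemize}
Each case gives a proper $3$-coloring of $G-t_jt_k$.

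You should also treat $d_H(x)=2$ explicitly. Then the triangle vertex $t_3$ has no pendant edge, and ``the neighbor attached at $t_i$'' does not exist when the deleted edge is $t_1t_2$. For $t_1t_3$ and $t_2t_3$ your argument works verbatim, with one pendant edge absent. For $t_1t_2$, take any $3$-coloring of $H-xy_1$. Give $t_2y_2$ the color of $xy_2$, and give $t_1y_1$ a color missing at $y_1$. The path $t_1t_3t_2$ can always be finished, because its two edges need only be distinct and each avoid one prescribed color.

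With these repairs your direct proof of the forward implication is complete and self-contained, which is more than the paper provides.
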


The following Haj\'os-join preservation result is classical; see
Jakobsen~\cite{Jakobsen1973}.  
\begin{lem}\label{lem:Hajos1}
Let $G$ and $G'$ be two $3$-critical graphs and
$v\in V(G)$, $v'\in V(G')$ two vertices such that
\[
d_G(v)+d_{G'}(v')\le 5.
\]
Let $uv\in E(G)$ and $u'v'\in E(G')$.  Then the  Haj\'os-join obtained
from $G-uv$ and $G'-u'v'$ by identifying $v$ and $v'$ and
adding the edge $uu'$ is $3$-critical.
\end{lem}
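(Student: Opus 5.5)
We prove the statement (Lemma~\ref{lem:Hajos1}) by first showing that the Haj\'os-join $J$ has maximum degree $3$ and is class~2, and then showing that every edge of $J$ is critical.

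\textbf{Degrees and connectivity.} Write $w$ for the identified vertex. Then $d_J(w)=d_G(v)-1+d_{G'}(v')-1\le 3$. Every other vertex keeps its degree; in particular $u$ and $u'$ lose $uv$, $u'v'$ but gain $uu'$. So $\Delta(J)=3$, since $G$ contains vertices of degree $3$. Connectivity follows because $G-uv$ and $G'-u'v'$ are connected: critical graphs are $2$-edge-connected, so $uv$ is not a bridge.

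\textbf{$J$ is class~2.} Suppose $\varphi\in\CC^3(J)$, and restrict it to the $G$-side $G-uv$. At $w$, the $G$-side edges use a set $A$ of $d_G(v)-1$ colors, and the $G'$-side edges use a disjoint set $B$ of $d_{G'}(v')-1$ colors.

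\begin{itemize}
\item Suppose $\varphi(uu')\notin A$. Give $uv$ the color $\varphi(uu')$. This is proper at $u$ because $uu'$ has been replaced by $uv$. It is proper at $v$ because $\varphi(uu')\notin A$. This yields a $3$-coloring of $G$, a contradiction.
\item Otherwise $\varphi(uu')=c\in A$, and then $c\notin B$. The symmetric argument on the $G'$ side colors $u'v'$ with $c$, which is proper at $v'$ because $c\notin B$. This yields a $3$-coloring of $G'$, again a contradiction.
\end{itemize}

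In both cases we contradict $\chi'=4$, so $J$ is class~2.

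\textbf{Every edge is critical.}

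\begin{itemize}
\item Edges on the $G$-side, other than $uu'$. Take $e\in E(G-uv)$. Color $G-e$ with $3$ colors, and color $G'-u'v'$ with $3$ colors; the latter is possible since $G'$ is critical. In the coloring of $G-e$, let $c=\varphi(uv)$ and let $A$ be the colors at $v$ other than $c$. In the coloring of $G'-u'v'$, the vertices $u'$ and $v'$ miss colors. Since $u'v'$ is critical, a Kempe-chain/parity argument lets us arrange that $u'$ and $v'$ have a common missing color $c'$. Now permute the colors on $G'$ so that $c'\mapsto c$ and the colors present at $v'$ avoid $A$. This is possible because $|A|+|\text{colors at }v'|\le (d_G(v)-1)+(d_{G'}(v')-1)\le 3$, with $c$ excluded from both sets.
\item Gluing. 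Glue at $w$ and give $uu'$ the color $c$. The vertex $u$ misses $c$ once $uv$ is removed, and $u'$ misses $c$ as well. At $w$ the colors are distinct. This gives a $3$-coloring of $J-e$.
\item Edges on the $G'$-side. These are handled symmetrically.
\item The edge $uu'$. Color $G-uv$ and $G'-u'v'$ so that the color sets at $v$ and at $v'$ are disjoint. This is possible by permuting colors, since the total count is at most $3$. Gluing gives a coloring of $J-uu'$.
\end{itemize}

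\textbf{Main obstacle.} The delicate step is the permutation. We need a single color $c$ that is simultaneously absent at $u$ and present neither among the $G$-side colors at $w$ nor among the $G'$-side colors at $w$, while keeping the two sides' colors at $w$ disjoint. We will handle this by casework on $(d_G(v),d_{G'}(v'))\in\{(2,2),(2,3),(3,2)\}$. If a direct permutation fails, we will try swapping along an $(\alpha,\beta)$-Kempe chain in $G'-u'v'$ starting at $v'$. Here the Parity Lemma~\ref{lem:parity-lemma}, applied to $G'-u'v'$ (where $u'$ and $v'$ have degree deficits), guarantees that the missing colors at $u'$ and $v'$ can be aligned as required.
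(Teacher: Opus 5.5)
\textbf{There is a genuine gap in the criticality step.} Your degree count, your class~2 argument and your treatment of the edge $uu'$ are fine. The case $e\in E(G-uv)$, however, rests on a claim that cannot hold. You propose to arrange, via Kempe chains or the Parity Lemma, a $3$-edge-coloring $\psi$ of $G'-u'v'$ in which $u'$ and $v'$ have a common missing color $c'$. No such coloring exists: coloring $u'v'$ with $c'$ would then $3$-edge-color $G'$, contradicting $\chi'(G')=4$. So in every $3$-edge-coloring of $G'-u'v'$ the missing sets at $u'$ and $v'$ are disjoint, and no recoloring changes this.

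Your counting fails even on its own terms. With $c$ excluded from both $A$ and the colors at $v'$, both sets must fit disjointly into the two colors of $[3]\setminus\{c\}$. That is impossible when $(d_G(v),d_{G'}(v'))=(2,3)$ and $e$ is not incident with $v$, since the sets then have sizes $1$ and $2$. The root cause is the requirement in your ``Main obstacle'' paragraph that $c$ be absent from the $G'$-side colors at $w$. It need not be: $uu'$ is not incident with $w$, and in fact $c$ is forced to be present there.

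\textbf{How to repair it.} The fix is simpler than your plan and needs no Kempe chains or parity.
\begin{enumerate}
\item Take any $\psi\in\CC^3(G'-u'v')$. Choose a color $c'$ missing at $u'$; one exists because $u'$ has degree at most $2$ in $G'-u'v'$.
\item By the observation above, $c'$ is present at $v'$.
\item Permute the colors of $\psi$ so that $c'\mapsto c$, and so that the remaining $d_{G'}(v')-2$ colors at $v'$ land in $[3]\setminus(A\cup\{c\})$. This set has at least $2-|A|\ge 3-d_G(v)\ge d_{G'}(v')-2$ elements, and the last inequality is exactly the hypothesis $d_G(v)+d_{G'}(v')\le 5$.
\item Now the two sides' colors at $w$ are disjoint, because $c\notin A$. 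Both $u$ and $u'$ miss $c$. Giving $uu'$ the color $c$ therefore yields a $3$-edge-coloring of $J-e$.
\end{enumerate}
Edges on the $G'$-side are handled symmetrically. For comparison, the paper gives no proof of this lemma and simply cites Jakobsen.
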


We prove the converse of the preceding lemma under an additional condition.   A
triangle $T$ in $G$ is \emph{independent} if no vertex in
$V(G)\setminus V(T)$ is adjacent to more than one vertex  of $T$.

\begin{thm}\label{lem:Hajos2}
Let $G$ be a nontrivial $3$-critical graph with no independent triangle.
If $G$ has an essential $2$-edge-cut, then $G$ is the Haj\'os join of two
$3$-critical graphs, at least one of which is nontrivial.
\end{thm}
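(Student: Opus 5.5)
Let $F=\{a_1b_1,a_2b_2\}$ be an essential $2$-edge-cut of $G$. By Lemma~\ref{lem:cyc-2-3-cut-property}(1), $G-F$ has exactly two components with vertex sets $A$ and $B$, where $a_1,a_2\in A$ and $b_1,b_2\in B$, and $F$ is a matching. The plan is to reverse the Haj\'os-join. On one side we identify the two cut edges into a new vertex; on the other we add an edge joining the two ends of the cut. Concretely, we consider
\[
G_A = G[A] + x,\quad xa_1, xa_2 \text{ added},\qquad G_B = G[B] + b_1b_2 \text{ (if } b_1b_2\notin E(G)\text{)},
\]
and symmetrically $G_B'$ and $G_A'$ with the roles of $A$ and $B$ exchanged. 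Then $G$ is the Haj\'os join of $G_A$ (with edge $xa_1$, identified at $x$ with $b_1$) and $G_B$ (with edge $b_1b_2$). In that join, $b_1$ takes the place of $x$, so $xa_2$ becomes the edge $a_2b_1$. Hence we must arrange that the cut edge landing at the identified vertex is relabelled appropriately; I would instead identify $x$ with $b_2$ and add the edge $a_1b_1$, whichever matches the definition. The main tasks are three: choosing the orientation so that both pieces are simple with maximum degree $3$; showing both are $3$-critical; and showing at least one is nontrivial.

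First, class 2. Suppose both $G_A$ and $G_B$ are $3$-colourable. Colour $G_A$ so that $x$ sees colours $\alpha$ on $xa_1$ and $\beta$ on $xa_2$ with $\alpha\ne\beta$. If $b_1b_2\notin E(G)$, colour $G_B$ and permute colours so that $b_1b_2$ receives $\alpha$. Then we want $a_1b_1$ coloured $\alpha$ and $a_2b_2$ coloured $\beta$; this forces $b_2$ to miss $\beta$ as well, which need not hold. The parity lemma, Lemma~\ref{lem:parity-lemma}, applied to $G[B]$ (where $b_1,b_2$ are the only deficient-relative vertices coming from the cut) should handle this. Under a colouring of $G-F$ restricted to $B$, the missing colours at $b_1,b_2$ coming from the cut are either equal or different. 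If they are equal, then $G[B]+b_1b_2$ is colourable. If they differ, then $G[B]$ plus a new vertex of degree two is colourable. So the strategy is to split according to whether some $3$-colouring of $G[B]$ gives $b_1,b_2$ equal or distinct missing colours. By the parity lemma these are the only two types of behaviour, given the degree counts, and similarly for $A$. A colourable pair of types glues to a $3$-colouring of $G$, a contradiction. This shows that at least one side "forces same" and the other "forces different", which determines the orientation: the side forcing "different" gets the edge $b_1b_2$ and becomes class 2, and the other side gets the vertex $x$ and becomes class 2. Criticality then follows by taking a $3$-colouring of $G-e$ for each $e$ in a piece and restricting it to the other side: that side supplies the required type, and we transplant.

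Second, simplicity and degrees. If $b_1b_2\in E(G)$, then since $G$ is subcubic and there is no independent triangle, I expect a contradiction or a reduction. A common neighbour of $b_1,b_2$ would create a triangle, and an edge $b_1b_2$ together with the cut structure gives degree considerations. Here VAL, Lemma~\ref{lem:val}, applied to degree-2 vertices, and the hypothesis of no independent triangle would be used. This is likely the fiddliest step, since $b_1b_2\in E(G)$ makes $G_B$ a multigraph. Nontriviality then comes by counting degree-2 vertices: $G$ has at least three, and $x$ has degree 2 in $G_A$, while the vertices $b_i$ keep their degrees in $G_B$. Degree-2 vertices of $G$ distribute between the pieces, so one piece receives at least two, hence at least three by Jakobsen's result. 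The main obstacle I expect is the orientation/type argument combined with excluding $b_1b_2\in E(G)$.
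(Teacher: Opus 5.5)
Your architecture is the paper's. You use pendant-edge states on the two sides; these are nonempty by criticality (since $F$ is essential) and disjoint by gluing, so one side admits only ``same'' colourings and the other only ``different'' ones. The ``same'' side receives the new vertex $x$, the ``different'' side receives the edge $b_1b_2$, criticality comes from restricting colourings of $G-e$, and nontriviality from counting degree-$2$ vertices. (The Parity Lemma plays no role in the dichotomy; with pendant edges, ``equal or distinct'' is exhaustive by definition.)

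The genuine gap is the step you flagged and left open: the cut-ends on the ``different'' side may already be adjacent, so $G[B]+b_1b_2$ is not simple. Neither of your suggested escapes works. Once the states are determined, the orientation is forced rather than chosen. Putting $x$ on the ``different'' side, or the edge on the ``same'' side, produces edge-$3$-colourable graphs, so exchanging the roles of $A$ and $B$ does not help. Nor can this configuration be refuted by VAL or by the absence of independent triangles for an arbitrarily chosen cut. If $b_1b_2\in E(G)$ and the third neighbours $b_1',b_2'$ of $b_1,b_2$ are distinct, no hypothesis is violated. So you must change the cut rather than derive a contradiction.

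The missing idea is an extremal choice of $F$: among all essential $2$-edge-cuts, take one whose ``same'' side is as large as possible. Suppose the ``different'' side $B$ has $b_1b_2\in E(G)$.
\begin{itemize}
\item Each $b_i$ has a third neighbour $b_i'\in B\setminus\{b_1,b_2\}$. If, say, $b_1$ had none, recolouring its pendant edge with the colour of the pendant edge at $b_2$ would give a ``same'' colouring of the $B$-side.
\item $b_1'\neq b_2'$, since otherwise $b_1b_2b_1'$ would be an independent triangle. This is exactly where that hypothesis is used.
\item Then $F'=\{b_1b_1',b_2b_2'\}$ is an essential $2$-edge-cut. The far side contains an edge at $b_1'$, because $\delta(G)\ge 2$ and $b_1'\neq b_2'$.
\item The side $A\cup\{b_1,b_2\}$ of $F'$ admits only ``same'' colourings. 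The edges $a_1b_1,a_2b_2$ receive a common colour $\alpha$, so $b_1b_2$ gets some $\gamma\neq\alpha$, and both new pendant edges are forced to the third colour.
\end{itemize}
This side strictly contains $A$, contradicting maximality. Hence $b_1b_2\notin E(G)$ and $G_B$ is simple.

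Apart from this, you still need to check criticality at the new edges: $G_A-xa_i$ is coloured from a ``same'' colouring of the $A$-side, and $G_B-b_1b_2=G[B]$. For the Haj\'os join, delete $xa_1$ and $b_1b_2$, identify $x$ with $b_2$, and add $a_1b_1$, as you eventually note.
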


\begin{proof}
We first choose the cut carefully. Let $F=\{u_1u_2,v_1v_2\}$
be an essential $2$-edge-cut of $G$.  By Lemma~\ref{lem:cyc-2-3-cut-property}(1), 
  $G-F$ has exactly two
components and $F$ is a matching in $G$.  Let $H_1$ and $H_2$ be these two components, where
$u_i,v_i\in V(H_i)$ for each $i\in [2]$. Then $u_1\ne v_1$ and $u_2\ne v_2$
since $F$ is a matching. 

For each $i\in [2]$, let $H_i^*$ be obtained from $H_i$ by adding two
pendant edges at $u_i$ and $v_i$. A coloring of $H_i^*$ is called a
same-state coloring if the two pendant edges receive the same color, and a
different-state coloring if they receive distinct colors.

Since $F$ is essential, each component $H_i$ contains an edge. Hence, for
each $i\in [2]$, the graph $H_i^*$ has at least one state: delete an edge
from the other component and restrict a proper edge $3$-coloring of the
resulting graph to the $H_i$-side.

The two sides cannot have a common state. Indeed, if both $H_1^*$ and
$H_2^*$ had same-state colorings, or both had different-state colorings,
then after permuting colors on one side, the two colorings could be glued
along the two edges of $F$, giving a proper edge $3$-coloring of $G$, a
contradiction.

Thus the two state sets are nonempty and disjoint subsets of
$\{\text{same},\text{different}\}$. Therefore one side has only same-state
colorings and the other has only different-state colorings. Among all
essential $2$-edge-cuts of $G$, choose $F$ so that the same-state side is
as large as possible. We keep the notation so that $H_1^*$ has only
same-state colorings and $H_2^*$ has only different-state colorings.

We claim that $u_2v_2\notin E(H_2)$. Suppose, to the contrary, that
$u_2v_2\in E(H_2)$. Since $H_2^*$ has only different-state colorings, each
of $u_2$ and $v_2$ has a neighbor in $H_2-\{u_2,v_2\}$. Indeed, if, say,
$u_2$ were incident in $H_2$ only with $u_2v_2$, then from a
different-state coloring of $H_2^*$ we could recolor the pendant edge at
$u_2$ with the color of the pendant edge at $v_2$, obtaining a same-state
coloring of $H_2^*$, a contradiction.

Let $u_2u_2'$ and $v_2v_2'$ be edges from $\{u_2,v_2\}$ to
$H_2-\{u_2,v_2\}$. We first show that $u_2'\ne v_2'$. Suppose not, and
write $u_2'=v_2'=z$. Then $u_2v_2zu_2$ is an independent triangle, 
contradicting  the assumption that $G$ has no independent triangle. Hence
$u_2'\ne v_2'$.

Since $\Delta(G)\le 3$, the edges $u_2u_2'$ and $v_2v_2'$ are the only
edges from $\{u_2,v_2\}$ to $H_2-\{u_2,v_2\}$. Thus
\[
F'=\{u_2u_2',v_2v_2'\}
\]
is an edge-cut. Moreover, $F'$ is essential. After deleting $F'$, one
component contains $H_1$ together with $u_2,v_2$ and the edges
$u_1u_2,v_1v_2,u_2v_2$, so this component contains an edge. The other side
contains $u_2'$ and $v_2'$. Since $u_2'\ne v_2'$ and $\delta(G)\ge 2$, the
vertex $u_2'$ has an incident edge different from $u_2u_2'$. This edge
lies in $H_2-\{u_2,v_2\}$, since $v_2$ is adjacent to $v_2'$ rather than
$u_2'$. Hence the other side also contains an edge. Therefore $F'$ is an
essential $2$-edge-cut.

Let $H_1'$ be the component of $G-F'$ containing $H_1$. Thus $H_1'$ is
obtained from $H_1$ by adding $u_2,v_2$ and the edges
$u_1u_2,v_1v_2,u_2v_2$.

We show that $(H_1')^*$ has only same-state colorings. Consider any proper
edge $3$-coloring of $(H_1')^*$. Restrict it to the old $H_1$-side,
viewing $u_1u_2$ and $v_1v_2$ as the two pendant edges of $H_1^*$. Since
$H_1^*$ has only same-state colorings, the edges $u_1u_2$ and $v_1v_2$
receive the same color, say $\alpha$. At the vertices $u_2$ and $v_2$, the
edge $u_2v_2$ must receive a color different from $\alpha$. Therefore the
two new pendant edges of $(H_1')^*$ must both receive the third color.
Thus every coloring of $(H_1')^*$ has same state. This contradicts the
maximal choice of the same-state side, since $H_1'$ properly contains
$H_1$. Hence $u_2v_2\notin E(H_2)$.

Now let $x$ be a new vertex, and define
\[
G_1=H_1+xu_1+xv_1
\qquad\text{and}\qquad
G_2=H_2+u_2v_2,
\]
where $u_2v_2$ is a new edge. This is well-defined and both graphs are simple because $u_1 \ne v_1$, 
$u_2\ne v_2$,  and $u_2v_2\notin E(H_2)$.

We claim that $G_1$ and $G_2$ are $3$-critical. First, neither graph is
edge $3$-colorable. If $G_1$ had a proper edge $3$-coloring, then the two
edges incident with $x$ would receive distinct colors. Deleting $x$ would
give a different-state coloring of $H_1^*$, a contradiction. Similarly, if
$G_2$ had a proper edge $3$-coloring, then replacing the edge $u_2v_2$ by
two pendant edges with the same color would give a same-state coloring of
$H_2^*$, a contradiction.

It remains to show that deleting any edge from either graph gives an edge
$3$-colorable graph. For $G_1$, if $e$ is one of the two edges incident
with $x$, then a same-state coloring of $H_1^*$ gives a proper edge
$3$-coloring of $G_1-e$. If $e\in E(H_1)$, then $G-e$ has a proper edge
$3$-coloring. Restricting this coloring to the $H_2$-side gives a state
coloring of $H_2^*$, hence a different-state coloring. Therefore the
restriction to $H_1-e$ gives a different-state coloring of $H_1^*-e$,
which extends to a proper edge $3$-coloring of $G_1-e$ by using the two
distinct boundary colors on $xu_1$ and $xv_1$.

For $G_2$, if $e=u_2v_2$, then $G_2-e=H_2$ is $3$-colorable by deleting
the pendant edges from any coloring of $H_2^*$. If $e\in E(H_2)$, then
$G-e$ has a proper edge $3$-coloring. Restricting this coloring to the
$H_1$-side gives a state coloring of $H_1^*$, hence a same-state coloring.
Therefore the restriction to $H_2-e$ gives a same-state coloring of
$H_2^*-e$, and replacing the two pendant edges by the single edge $u_2v_2$
with their common color gives a proper edge $3$-coloring of $G_2-e$.

Thus $G_1$ and $G_2$ are $3$-critical.

Finally, $G$ is obtained as the Haj\'os join of $G_1$ and $G_2$: delete the
edge $xv_1$ from $G_1$, delete the edge $u_2v_2$ from $G_2$, identify $x$
with $u_2$, and add the edge $v_1v_2$. The remaining edge $xu_1$ becomes
$u_1u_2$, so the resulting graph is exactly $G$.

Since $G$ is nontrivial, it has at least three vertices of degree $2$.
Suppose, for a contradiction, that both $G_1$ and $G_2$ are trivial. Then
each has exactly one vertex of degree $2$. In $G_1$, the new vertex $x$
has degree $2$, so $x$ is the unique degree-$2$ vertex of $G_1$. In the
Haj\'os-join described above, the vertex $x$ is identified with $u_2$ and
does not remain as a separate vertex. All vertices of $G_1-x$ keep their
degrees, and the vertices of $G_2$ keep their degrees after deleting
$u_2v_2$ and adding $v_1v_2$. Hence the resulting graph has at most two 
vertices  of degree $2$.  This contradicts that $G$ has at least three vertices of degree
$2$. Therefore at least one of $G_1$ and $G_2$ is nontrivial.
\end{proof}

 In the remainder of this subsection, we study properties of the
Meredith-type extension.

Let $G$ be a subcubic graph with three distinct specified vertices
$x_1,x_2,x_3$, each of degree less than $3$. We denote by $G^p$ the graph
obtained from $G$ by adding three pendant edges $e_1,e_2,e_3$, where
$e_i$ is incident with $x_i$ for each $i\in[3]$. We say that a set
$F\subseteq \{e_1,e_2,e_3\}$ with $|F|=2$ is \emph{realized} in $G^p$ if
$G^p$ has a proper edge $3$-coloring in which the two edges in $F$
receive the same color and the remaining distinguished pendant edge
receives a different color. We say that $G^p$ has a
\emph{monochromatic state} if $G^p$ has a proper edge $3$-coloring in
which $e_1,e_2,e_3$ all receive the same color.

Let $D$ and $H$ be two subcubic graphs, and suppose that $D^p$ and $H^p$
have distinguished pendant edges $f_1,f_2,f_3$ and $e_1,e_2,e_3$,
respectively. Let
\[
\mathcal S_D=\{F\subseteq \{f_1,f_2,f_3\}: |F|=2 \text{ and } F
\text{ is realized in } D^p\}.
\]
For each edge $e\in E(H)$, let
\[
\mathcal S_H(e)=\{F\subseteq \{e_1,e_2,e_3\}: |F|=2 \text{ and } F
\text{ is realized in } H^p-e\}.
\]
We also say that $H^p-e$ has a monochromatic state if it has a proper
edge $3$-coloring in which $e_1,e_2,e_3$ all receive the same color.

We say that $D$ and $H$ have a \emph{compatible} $3$-coloring if one of
the following holds.
\begin{enumerate}
    \item[Type I:] $\mathcal S_D$ consists of all three pairs from
    $\{f_1,f_2,f_3\}$, and $\mathcal S_H(e)\neq \emptyset$ for every
    $e\in E(H)$.

    \item[Type II:] $|\mathcal S_D|=2$, and $|\mathcal S_H(e)|\ge 2$ for
    every $e\in E(H)$.

    \item[Type III:] $D^p$ has a monochromatic state, and $H^p-e$ has a
    monochromatic state for every $e\in E(H)$.
\end{enumerate}

The result below says that we can construct a $3$-critical graph from
$D^p$ and $H^p$ whenever $D$ and $H$ have a compatible $3$-coloring.

\begin{thm}\label{thm:compatible-extension}
Let $G$ be a nontrivial $3$-critical graph and let $y\in V(G)$ with 
$N_G(y)=\{y_1,y_2,y_3\}$. Let $H$ be a  connected class~$1$ graph of maximum degree
$3$ with exactly three vertices $x_1,x_2,x_3$ of degree $2$, and all other
vertices of degree $3$.

Let $D=G-y$,  and let $D^p$ be
obtained from $D$ by adding pendant edges $f_i$ incident with $y_i$ for
each $i\in[3]$. Let $H^p$ be obtained from $H$ by adding pendant edges
$e_i$ incident with $x_i$ for each $i\in[3]$. Suppose that $D$ and $H$
have a compatible $3$-coloring. 
Then, for every bijection $\sigma:[3]\to[3]$, the graph
\[
J=(G-y)\cup H+\{y_i x_{\sigma(i)}: i\in[3]\}
\]
is a nontrivial $3$-critical graph.
\end{thm}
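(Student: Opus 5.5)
The plan is to verify directly the four requirements for $J$: $\Delta(J)=3$ and $J$ connected, $J$ not edge $3$-colorable, $J-e$ edge $3$-colorable for every $e$, and $J$ having at least three vertices of degree $2$. Everything rests on a comparison of the \emph{boundary states} on the three cut edges $y_ix_{\sigma(i)}$. The three cut edges play the role of $f_i$ on the $D$-side and of $e_{\sigma(i)}$ on the $H$-side.

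\textbf{Basic properties.} Each $y_i$ trades $yy_i$ for $y_ix_{\sigma(i)}$, and each $x_j$ rises from degree $2$ to degree $3$, so $\Delta(J)=3$. $J$ is connected because $H$ is connected and every component of $D=G-y$ contains some $y_i$, as $G$ is connected. For nontriviality, note that $y$ has degree $3$ and all degrees in $G-y$ are preserved in $J$. Hence the (at least three) degree-$2$ vertices of the nontrivial graph $G$ remain degree-$2$ vertices of $J$, and then $J$ is nontrivial by the discussion in the introduction once criticality is established.

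\textbf{The key parity fact.} In every proper edge $3$-coloring of $H^p$, the pendant edges $e_1,e_2,e_3$ receive three distinct colors. To see this, restrict the coloring to $H$. Every vertex other than $x_1,x_2,x_3$ has degree $3$ and misses nothing, while $x_j$ misses exactly the color of $e_j$. The number of vertices of $H$ is odd, since $3|V(H)|-3$ is even, so by the Parity Lemma (Lemma~\ref{lem:parity-lemma}) each color must be missing at an odd number of vertices. With only three missing slots in total, each color is missed exactly once. Since $H$ is class~$1$, such colorings exist. On the other side, $D^p$ has no coloring with $f_1,f_2,f_3$ all distinct. Otherwise, replacing the $f_i$ by edges $yy_i$ would $3$-color $G$, contradicting $\chi'(G)=4$.

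\textbf{Non-colorability and criticality.} A $3$-coloring of $J$ would restrict to colorings of $D^p$ and $H^p$ that agree on the cut edges. The $H$-side forces all three cut colors to be distinct, which the $D$-side forbids, so $\chi'(J)=4$. For criticality I would split into three cases.
\begin{enumerate}[(i)]
\item $e\in E(D)$. The graph $G-e$ is $3$-colorable, and since $y$ has degree $3$, this yields a coloring of $D^p-e$ with distinct colors on $f_1,f_2,f_3$. After permuting colors, it glues to an all-distinct coloring of $H^p$ in the order prescribed by $\sigma$.
\item $e=y_ix_{\sigma(i)}$. A coloring of $G-yy_i$ gives a coloring of $D$ in which the boundary edges at $y_j$ and $y_k$ ($\{i,j,k\}=[3]$) get distinct colors. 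After permuting colors on the $H^p$ side, match these two colors with an all-distinct coloring of $H^p$ and drop the edge $e$.
\item $e\in E(H)$. This is where the compatibility types enter. In Type~I, $H^p-e$ realizes some pair, and under $\sigma$ the $D$-side realizes every pair, so the two colorings glue after permuting colors. In Type~II, the two realized pairs on the $D$-side and the at least two realized pairs of $H^p-e$ (transported by $\sigma$) are two $2$-element subfamilies of the three pairs, so they share a common pair, which gives matching states. In Type~III, glue the two monochromatic states.
\end{enumerate}

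\textbf{Expected obstacle.} The main subtlety is the parity fact about $H^p$ and making sure that the bookkeeping of pairs under the bijection $\sigma$ is correct. In particular, one must check that a realized pair is exactly the data needed to glue: two cut edges with equal colors and the third with a different color, with the specific colors adjusted by permutation on one side. The rest is routine gluing along a $3$-edge-cut.
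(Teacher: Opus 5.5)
Your proposal is correct and follows the paper's proof essentially step for step. Both use the parity argument forcing three distinct colors on the pendant edges of $H^p$, the absence of such rainbow states on $D^p$ to show $J$ is class~$2$, the three-case criticality check (edges of $D$, the cut edges $y_ix_{\sigma(i)}$, and edges of $H$ via Types I--III), and preservation of the degree-$2$ vertices of $G$ for nontriviality. Your explicit checks that $\Delta(J)=3$ and that $J$ is connected are small additions the paper leaves implicit.
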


\begin{proof}
First note a simple consequence of the assumptions on $H$. Since $H$ is
class~$1$, $H$ has a proper edge $3$-coloring. Since $H$ has exactly
three vertices of degree $2$ and all other vertices have degree $3$, we
have $|V(H)|$ odd. In any proper edge $3$-coloring of $H^p$, the colors
on $e_1,e_2,e_3$ are precisely the colors missing at
$x_1,x_2,x_3$ in the restriction to $H$. Hence, by the Parity Lemma
(Lemma~\ref{lem:parity-lemma}), in every proper edge $3$-coloring of
$H^p$, the pendant edges $e_1,e_2,e_3$ receive three distinct colors.

We first show that $J$ is not edge $3$-colorable. Suppose, to the
contrary, that $J$ has a proper edge $3$-coloring. Restrict this coloring
to $H$ and view the three edges $y_i x_{\sigma(i)}$ as the pendant edges
$e_{\sigma(i)}$ of $H^p$. By the preceding paragraph, these three edges
receive three distinct colors. Now restrict the same coloring to $D=G-y$
and view the three edges $y_i x_{\sigma(i)}$ as the pendant edges
$f_i$ of $D^p$. Then $f_1,f_2,f_3$ receive three distinct colors, so we
can replace them by the three edges $yy_1,yy_2,yy_3$ and obtain a proper
edge $3$-coloring of $G$, a contradiction. Hence $J$ is class~$2$.

It remains to show that deleting any edge of $J$ gives an edge
$3$-colorable graph. We consider three cases.

First let $e\in E(G-y)$. Since $G$ is $3$-critical, $G-e$ has a proper
edge $3$-coloring. At the vertex $y$, the three edges
$yy_1,yy_2,yy_3$ receive three distinct colors. Deleting $y$ and
replacing these three edges by the pendant edges $f_1,f_2,f_3$ gives a
proper edge $3$-coloring of $D^p-e$ in which $f_1,f_2,f_3$ receive three
distinct colors. Also, as shown above, $H^p$ has a proper edge
$3$-coloring in which $e_1,e_2,e_3$ receive three distinct colors. After
a permutation of colors, these two colorings agree on the identified
pendant edges. Hence they combine to give a proper edge $3$-coloring of
$J-e$.

Next let $e=y_i x_{\sigma(i)}$ be one of the three new edges. Since $G$
is $3$-critical, $G-yy_i$ has a proper edge $3$-coloring. The two edges
$yy_j,yy_k$, where $\{i,j,k\}=[3]$, receive distinct colors. Deleting
$y$ gives a coloring of $G-y$ in which the two remaining new edges should
receive two distinct prescribed colors. On the $H$-side, take a proper
edge $3$-coloring of $H^p$ in which $e_1,e_2,e_3$ receive three distinct
colors, delete the pendant edge $e_{\sigma(i)}$, and permute the colors
so that $e_{\sigma(j)}$ and $e_{\sigma(k)}$ receive the prescribed
colors. These two colorings combine to give a proper edge $3$-coloring of
$J-e$.

Finally let $e\in E(H)$. We use compatibility. 

If Type I holds, then $\mathcal S_H(e)\neq\emptyset$; choose a pair
$Q_0\in \mathcal S_H(e)$. Since $\mathcal S_D$ contains all three pairs,
the corresponding pair $P=\{f_i:e_{\sigma(i)}\in Q_0\}$
is realized in $D^p$.

If Type II holds, then $|\mathcal S_D|=2$ and $|\mathcal S_H(e)|\ge 2$.
Under the bijection $\sigma$, the two pairs in $\mathcal S_D$ correspond
to two pairs of $\{e_1,e_2,e_3\}$. Since two families of two $2$-subsets
of a $3$-element set must intersect, there is a pair
$Q_0\in \mathcal S_H(e)$ whose corresponding pair $P=\{f_i:e_{\sigma(i)}\in Q_0\}$
lies in $\mathcal S_D$.

In either Type I or Type II, choose a coloring of $D^p$ realizing $P$ and
a coloring of $H^p-e$ realizing $Q_0$. After a permutation of colors on
one side, the colors on the identified pendant edges agree. Hence the two
colorings combine to give a proper edge $3$-coloring of $J-e$.

It remains to consider Type III. In this case, both $D^p$ and $H^p-e$
have monochromatic states. Choose such colorings. After a permutation of
colors on one side, all three pendant edges on both sides receive the
same color. Hence the two colorings agree on the identified pendant
edges, and they combine to give a proper edge $3$-coloring of $J-e$.

Finally, $J$ is nontrivial. Indeed, the vertex $y$ has degree $3$ in
$G$, and the operation replaces $y$ by the graph $H$ while preserving the
degrees of all vertices of $G-y$. Hence all degree-$2$ vertices of $G$
remain degree-$2$ vertices of $J$. Since $G$ is nontrivial, $J$ has at
least three degree-$2$ vertices.
\end{proof}

We demonstrate Theorem~\ref{thm:compatible-extension} below by 
constructing a nontrivial 3-critical graph for each of the compatible coloring 
types. 

\paragraph{Type I example.}

Let $G$ be the unique order-$9$ nontrivial $3$-critical graph with graph6
code \texttt{HCOf@pS}, let $y=1$, and write
$N_G(y)=\{4,6,8\}$, where the pendant edges $f_1,f_2,f_3$ of $D^p$ are
incident with $4,6,8$, respectively. The verified $D^p$-side audit gives
\[
  \mathcal S_D=\big\{\{f_1,f_2\},\{f_1,f_3\},\{f_2,f_3\}\big\}.
\]
Let $H$ be the class~1 graph of order $7$ with graph6 code \texttt{FBJKo}
(canonical form \texttt{F@Um\_}), whose three degree-$2$ vertices are
$x_1=0$, $x_2=1$, and $x_3=2$. For every $e\in E(H)$, the verified
$H^p-e$ audit gives $\mathcal S_H(e)\neq\emptyset$, so the $H$-side
condition holds. Here the minimum is attained at the edge $46$, where
\[
  \mathcal S_H(46)=\big\{\{e_1,e_3\}\big\},\qquad |\mathcal S_H(46)|=1,
\]
giving a genuinely minimal Type~I instance, in contrast to $H=K_{2,3}$,
for which $|\mathcal S_H(e)|=3$ for every $e$.
Hence the pair in Theorem~\ref{thm:compatible-extension} is compatible of
Type I, and the resulting extension $J$ is a nontrivial $3$-critical
graph of order $15$ (verified graph6 \texttt{N??GgoWHICOGO@G?o@G}); see
Figure~\ref{fig:compatible-type-I-example}.

\begin{figure}[htbp]
  \centering
  \begin{tikzpicture}[scale=1.0,
      line cap=round,line join=round,
      vtx/.style={circle,draw,fill=white,inner sep=1.2pt,minimum size=5mm},
      yvtx/.style={circle,draw,fill=red!70,inner sep=1.2pt,minimum size=5mm},
      nbr/.style={circle,draw,fill=blue!35,inner sep=1.2pt,minimum size=5mm},
      bdry/.style={circle,draw,fill=orange!75,inner sep=1.2pt,minimum size=5mm}]
      
    % Left graph G
    \node[vtx]  (g0) at (0.000,1.600) {\scriptsize $0$};
    \node[yvtx] (g1) at (1.028,1.226) {\scriptsize $1$};
    \node[vtx]  (g2) at (1.576,0.278) {\scriptsize $2$};
    \node[vtx]  (g3) at (1.386,-0.800) {\scriptsize $3$};
    \node[nbr]  (g4) at (0.547,-1.504) {\scriptsize $4$};
    \node[vtx]  (g5) at (-0.547,-1.504) {\scriptsize $5$};
    \node[nbr]  (g6) at (-1.386,-0.800) {\scriptsize $6$};
    \node[vtx]  (g7) at (-1.576,0.278) {\scriptsize $7$};
    \node[nbr]  (g8) at (-1.028,1.226) {\scriptsize $8$};

    \draw (g0) -- (g3);
    \draw (g0) -- (g6);
    \draw (g1) -- (g4);
    \draw (g1) -- (g6);
    \draw (g1) -- (g8);
    \draw (g2) -- (g5);
    \draw (g2) -- (g6);
    \draw (g2) -- (g7);
    \draw (g3) -- (g7);
    \draw (g3) -- (g8);
    \draw (g4) -- (g7);
    \draw (g5) -- (g8);

    \node at (0.000,-2.050) {$G$};

    % Right graph H --- improved layout
    \node[bdry] (h0) at (4.90,-1.65) {\scriptsize $0$};
    \node[bdry] (h1) at (6.05, 1.55) {\scriptsize $1$};
    \node[vtx]  (h6) at (7.55,-1.65) {\scriptsize $6$};

    \node[vtx]  (h5) at (5.38,-0.15) {\scriptsize $5$};
    \node[vtx]  (h3) at (6.72, 0.30) {\scriptsize $3$};
    \node[vtx]  (h4) at (6.12,-1.02) {\scriptsize $4$};
    \node[bdry] (h2) at (6.18,-0.35) {\scriptsize $2$};

    \draw (h0) -- (h5);
    \draw (h0) -- (h6);
    \draw (h1) -- (h3);
    \draw (h1) -- (h5);
    \draw (h2) -- (h3);
    \draw (h2) -- (h4);
    \draw (h3) -- (h6);
    \draw (h4) -- (h5);
    \draw (h4) -- (h6);

    \node at (6.20,-2.18) {$H$};
  \end{tikzpicture}
  \caption{A compatible pair of Type~I. Left: the order-$9$ graph $G$ with
  $y=1$ (red) and $N_G(y)=\{4,6,8\}$ (blue). Right: the class~1 graph $H$
  (graph6 \texttt{FBJKo}) with the three degree-$2$ vertices
  $x_1=0,x_2=1,x_3=2$ (orange).}
  \label{fig:compatible-type-I-example}
\end{figure}

\paragraph{Type II example.}

Let $G$ be the first order-$11$ nontrivial $3$-critical graph with graph6
code \texttt{J?\textasciigrave{}@E\_[KbO?}, let $y=2$, and write
$N_G(y)=\{6,9,10\}$, where the pendant edges $f_1,f_2,f_3$ of $D^p$ are
incident with $6,9,10$, respectively. The verified $D^p$-side audit gives
\[
  \mathcal S_D=\big\{\{f_1,f_2\},\{f_1,f_3\}\big\},
\]
so the pair $\{f_2,f_3\}$ is not realized and $D^p$ has no
monochromatic state. Let $H$ be the class~1 graph of order $5$ with
graph6 code \texttt{DRs} (canonical form \texttt{DLs}), whose three
degree-$2$ vertices are $x_1=0$, $x_2=1$, and $x_3=2$.  For every
$e\in E(H)$, the verified $H^p-e$ audit gives $|\mathcal S_H(e)|\ge 2$,
with the minimum value $2$ attained, for instance, at the edge $02$, where
\[
  \mathcal S_H(02)=\big\{\{e_1,e_2\},\{e_2,e_3\}\big\}.
\]
Hence the pair in
Theorem~\ref{thm:compatible-extension} is compatible of Type II, and the
resulting extension $J$ is a nontrivial $3$-critical graph of order $15$
(verified graph6 \texttt{N?COoG?\_kWGCGGOOcG\_}); see
Figure~\ref{fig:compatible-type-II-example}. 

In this Type~II case, one may take $H$ to be the graph obtained from any
Kotzig graph by deleting a single vertex. Here a class~1 cubic graph is
called a \emph{Kotzig graph} if it has a unique proper edge $3$-coloring, up to
a permutation of the colors.

\begin{figure}[htbp]
  \centering
  \begin{tikzpicture}[scale=1.0,
      vtx/.style={circle,draw,fill=white,inner sep=1.2pt,minimum size=5mm},
      yvtx/.style={circle,draw,fill=red!70,inner sep=1.2pt,minimum size=5mm},
      nbr/.style={circle,draw,fill=blue!35,inner sep=1.2pt,minimum size=5mm},
      bdry/.style={circle,draw,fill=orange!75,inner sep=1.2pt,minimum size=5mm}]
    \node[vtx] (g0) at (0.000,1.600) {\scriptsize $0$};
    \node[vtx] (g1) at (0.865,1.346) {\scriptsize $1$};
    \node[yvtx] (g2) at (1.455,0.665) {\scriptsize $2$};
    \node[vtx] (g3) at (1.584,-0.228) {\scriptsize $3$};
    \node[vtx] (g4) at (1.209,-1.048) {\scriptsize $4$};
    \node[vtx] (g5) at (0.451,-1.535) {\scriptsize $5$};
    \node[nbr] (g6) at (-0.451,-1.535) {\scriptsize $6$};
    \node[vtx] (g7) at (-1.209,-1.048) {\scriptsize $7$};
    \node[vtx] (g8) at (-1.584,-0.228) {\scriptsize $8$};
    \node[nbr] (g9) at (-1.455,0.665) {\scriptsize $9$};
    \node[nbr] (g10) at (-0.865,1.346) {\scriptsize $10$};
    \draw (g0) -- (g4);
    \draw (g0) -- (g7);
    \draw (g1) -- (g5);
    \draw (g1) -- (g7);
    \draw (g1) -- (g10);
    \draw (g2) -- (g6);
    \draw (g2) -- (g9);
    \draw (g2) -- (g10);
    \draw (g3) -- (g7);
    \draw (g3) -- (g8);
    \draw (g3) -- (g9);
    \draw (g4) -- (g8);
    \draw (g4) -- (g10);
    \draw (g5) -- (g8);
    \draw (g6) -- (g9);
    \node at (0.000,-2.050) {$G$};
    \node[bdry] (h0) at (4.400,-1.200) {\scriptsize $0$};
    \node[bdry] (h1) at (5.800,1.200) {\scriptsize $1$};
    \node[bdry] (h2) at (4.800,1.200) {\scriptsize $2$};
    \node[vtx] (h3) at (5.300,0.000) {\scriptsize $3$};
    \node[vtx] (h4) at (6.200,-1.200) {\scriptsize $4$};
    \draw (h0) -- (h2);
    \draw (h0) -- (h4);
    \draw (h1) -- (h3);
    \draw (h1) -- (h4);
    \draw (h2) -- (h3);
    \draw (h3) -- (h4);
    \node at (5.300,-2.050) {$H$};
  \end{tikzpicture}
  \caption{A compatible pair of Type~II. Left: the order-$11$ graph $G$
  with $y=2$ (red) and $N_G(y)=\{6,9,10\}$ (blue). Right: the class~1
  graph $H$ (graph6 \texttt{DRs}) with the three degree-$2$ vertices
  $x_1=0,x_2=1,x_3=2$ (orange).}
  \label{fig:compatible-type-II-example}
\end{figure}

\paragraph{Type III example.}

Let $G$ be the order-$9$ nontrivial $3$-critical graph with graph6 code
\texttt{HCOf@pS}, let $y=1$, and write $N_G(y)=\{4,6,8\}$, where the
pendant edges $f_1,f_2,f_3$ of $D^p$ are incident with $4,6,8$,
respectively. The verified $D^p$-side audit gives a monochromatic state.
Let $H$ be the class~$1$ graph with graph6 code \texttt{FCZb\_}, where
the three degree-$2$ vertices are $x_1=0$, $x_2=3$, and $x_3=4$. For
every $e\in E(H)$, the verified $H^p-e$ audit gives a monochromatic
state. Hence the pair in Theorem~\ref{thm:compatible-extension} is
compatible of Type III, and the resulting extension $J$ is a nontrivial
$3$-critical graph of order $15$ (verified graph6
\texttt{NpDQOG@GG@?@?\textasciigrave{}?D?@W}); see
Figure~\ref{fig:compatible-type-III-example}.

\begin{figure}[htbp]
  \centering
  \begin{tikzpicture}[scale=1.0,
      vtx/.style={circle,draw,fill=white,inner sep=1.2pt,minimum size=5mm},
      yvtx/.style={circle,draw,fill=red!70,inner sep=1.2pt,minimum size=5mm},
      nbr/.style={circle,draw,fill=blue!35,inner sep=1.2pt,minimum size=5mm},
      bdry/.style={circle,draw,fill=orange!75,inner sep=1.2pt,minimum size=5mm}]
    \node[vtx] (g0) at (0.000,1.600) {\scriptsize $0$};
    \node[yvtx] (g1) at (1.028,1.226) {\scriptsize $1$};
    \node[vtx] (g2) at (1.576,0.278) {\scriptsize $2$};
    \node[vtx] (g3) at (1.386,-0.800) {\scriptsize $3$};
    \node[nbr] (g4) at (0.547,-1.504) {\scriptsize $4$};
    \node[vtx] (g5) at (-0.547,-1.504) {\scriptsize $5$};
    \node[nbr] (g6) at (-1.386,-0.800) {\scriptsize $6$};
    \node[vtx] (g7) at (-1.576,0.278) {\scriptsize $7$};
    \node[nbr] (g8) at (-1.028,1.226) {\scriptsize $8$};
    \draw (g0) -- (g3);
    \draw (g0) -- (g6);
    \draw (g1) -- (g4);
    \draw (g1) -- (g6);
    \draw (g1) -- (g8);
    \draw (g2) -- (g5);
    \draw (g2) -- (g6);
    \draw (g2) -- (g7);
    \draw (g3) -- (g7);
    \draw (g3) -- (g8);
    \draw (g4) -- (g7);
    \draw (g5) -- (g8);
    \node at (0.000,-2.050) {$G$};
    \node[bdry] (h0) at (4.114,-1.800) {\scriptsize $0$};
    \node[vtx] (h1) at (5.114,-1.000) {\scriptsize $1$};
    \node[vtx] (h2) at (5.114,1.200) {\scriptsize $2$};
    \node[bdry] (h3) at (6.114,-1.800) {\scriptsize $3$};
    \node[bdry] (h4) at (5.114,0.200) {\scriptsize $4$};
    \node[vtx] (h5) at (4.214,-0.600) {\scriptsize $5$};
    \node[vtx] (h6) at (5.914,-0.600) {\scriptsize $6$};
    \draw (h0) -- (h3);
    \draw (h0) -- (h5);
    \draw (h1) -- (h4);
    \draw (h1) -- (h5);
    \draw (h1) -- (h6);
    \draw (h2) -- (h4);
    \draw (h2) -- (h5);
    \draw (h2) -- (h6);
    \draw (h3) -- (h6);
    \node at (5.300,-2.150) {$H$};
  \end{tikzpicture}
  \caption{A compatible pair of Type~III. Left: the order-$9$ graph $G$
  with $y=1$ (red) and $N_G(y)=\{4,6,8\}$ (blue). Right: the class~1
  graph $H$ (graph6 \texttt{FCZb\_}) with the three degree-$2$ vertices
  $x_1=0,x_2=3,x_3=4$ (orange).}
  \label{fig:compatible-type-III-example}
\end{figure}

The following is a weaker converse of Theorem~\ref{thm:compatible-extension}.

\begin{thm}\label{lem:cyclic-cut-reduction}
Let $G'$ be a cyclically $3$-edge-connected nontrivial $3$-critical graph,
and let $F=\{y_i x_i:i\in[3]\}$
be a cyclic $3$-edge-cut of $G'$. Let $H$ be one component of $G'-F$ with
$x_1,x_2,x_3\in V(H)$, and suppose that $H$ contains no vertex that has
degree $2$ in $G'$. Let $G$ be obtained from $G'$ by contracting $H$ to a
single vertex $y$. Let $D=G-y$, with distinguished vertices
$y_1,y_2,y_3$, and let $D^p$ be obtained from $D$ by adding pendant edges
$f_i$ incident with $y_i$ for each $i\in[3]$. Let $H^p$ be obtained from
$H$ by adding pendant edges $e_i$ incident with $x_i$ for each $i\in[3]$.

Then the following hold.
\begin{enumerate}[(1)]

 \item $H$ is a connected class~$1$ graph with exactly three vertices
    of degree $2$, namely $x_1,x_2,x_3$, and all other vertices of $H$
    have degree $3$.
    
    \item $G$ is a nontrivial $3$-critical graph.

    \item $G'$ is the Meredith-type extension of $G$ through $H$ at $y$.

    \item $D^p$ realizes at least two pair states. That is, for at least
    two pairs $\{i,j\}\subseteq[3]$, the graph $D^p$ has a proper edge
    $3$-coloring in which $f_i$ and $f_j$ receive the same color and the
    remaining pendant edge receives a different color.

    \item For every edge $e\in E(H)$, either $D^p$ and $H^p-e$ have a
    common pair state, or both $D^p$ and $H^p-e$ have a common
    monochromatic state on their three pendant edges.
\end{enumerate}
\end{thm}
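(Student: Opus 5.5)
The plan is to settle the structure of $H$ first, then obtain criticality of $G$ from Lemma~\ref{lem:contract-deficiency}, and finally read off (3)--(5) by gluing and splitting colorings along the cut $F$.

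\textbf{Structure of $H$.} By Lemma~\ref{lem:cyc-2-3-cut-property}(2), $G'-F$ has exactly two components and $F$ is a matching. Hence $x_1,x_2,x_3$ are distinct, each has exactly one edge of $F$, and $H$ is connected. No vertex of $H$ has degree $2$ in $G'$, and $\delta(G')\ge 2$ (VAL), so every vertex of $H$ has degree $3$ in $G'$. Therefore $x_1,x_2,x_3$ have degree $2$ in $H$ and all other vertices of $H$ have degree $3$. This gives $\df(H)=3$, and the handshake count $3|V(H)|-3=2e(H)$ forces $|V(H)|$ to be odd.

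\textbf{Part (2).} $H$ is an induced proper subgraph of odd order with $\df(H)=3$, so Lemma~\ref{lem:contract-deficiency} shows that $G$ (a priori a multigraph) is $3$-critical. The contraction is simple because the $y_i$ are distinct, since $F$ is a matching, and $y$ gets degree $3$.

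\textbf{$G$ is nontrivial.} The other component of $G'-F$ contains a cycle, so it is not a single vertex. Every degree-$2$ vertex of $G'$ lies in $D$ and keeps its degree in $G$. So $G$ has at least three degree-$2$ vertices.

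\textbf{$H$ is class~1.} Delete an edge of $G'$ on the $D$-side. The resulting $3$-coloring of $G'$ restricts to a $3$-coloring of $H$, so $H$ is class~1. This completes (1). Part (3) is then just the definition of the Meredith-type extension of $G$ through $H$ at $y$, with the matching $y_ix_i$.

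\textbf{Key fact on the $H$-side.} As in the proof of Theorem~\ref{thm:compatible-extension}, the Parity Lemma forces every $3$-coloring of $H^p$ to give $e_1,e_2,e_3$ three distinct colors.

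\textbf{Part (5).} Take $e\in E(H)$ and a $3$-coloring of $G'-e$. Restrict it to both sides, viewing each $y_ix_i$ as both $f_i$ and $e_i$; the two sides then carry the same pattern on the cut. That pattern cannot be rainbow: if it were, the $D$-side coloring would extend to $G$ by coloring $yy_i$ with the color of $f_i$, contradicting that $G$ is class~2. Every $3$-coloring of $D^p$ therefore has a non-rainbow pattern. Consequently the common pattern is either a pair state or monochromatic, and it is shared by $D^p$ and $H^p-e$. This is exactly (5).

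\textbf{Part (4).} This is the step I expect to be the main obstacle. Use colorings of $G'-y_ix_i$ for $i\in[3]$. In a coloring of $G'-y_1x_1$, the $H$-side together with the missing colors at $x_1$ gives a coloring of $H^p$, so $e_1,e_2,e_3$ are rainbow. In particular $y_2x_2$ and $y_3x_3$ get distinct colors $\alpha\ne\beta$. On the $D$-side, $y_1$ misses some color $\gamma$. Then $\gamma\ne$ the color $\delta$ missing at $x_1$ on the $H$-side, for otherwise we could color $y_1x_1$ with it. If $\gamma$ were the third color, distinct from $\alpha$ and $\beta$, then $D^p$ would have a rainbow coloring, which is impossible. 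Hence $\gamma\in\{\alpha,\beta\}$, and $D^p$ realizes either $\{f_1,f_2\}$ or $\{f_1,f_3\}$.

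Doing this for each $i\in[3]$ shows that every index lies in some realized pair. A family of $2$-subsets of $[3]$ covering all three indices must contain at least two pairs, which proves (4). The delicate points to double-check here are that each chosen coloring of $D^p$ is genuinely non-rainbow, which needs the critical edge argument, and that $f_1$ may be assigned the missing color $\gamma$ at $y_1$.
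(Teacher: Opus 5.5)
Your proposal is correct and follows essentially the same route as the paper. Both use Lemma~\ref{lem:cyc-2-3-cut-property}(2) to get the two-component and matching structure, then the degree count and Lemma~\ref{lem:contract-deficiency} for (1)--(2), and then restrict a $3$-coloring of $G'-e$ to the two sides of $F$ for (5).

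The only real variation is in (4). You color $G'-y_ix_i$ and combine the rainbow property of $H^p$ with $\chi'(G')=4$. The paper instead colors $G-yy_i$, using the criticality of $G$ from (2), and then appeals to the absence of rainbow states in $D^p$. Both arguments work. In your version the no-rainbow step you flagged as delicate is in fact redundant: $\gamma\ne\delta$ and $\{\alpha,\beta,\delta\}=[3]$ already force $\gamma\in\{\alpha,\beta\}$.
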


\begin{proof}
By Lemma~\ref{lem:cyc-2-3-cut-property}(2), $G'-F$ has exactly two components
and $F$ is a matching in $G'$.

We  prove the degree assertion for $H$. Since $H$ contains no vertex
that has degree $2$ in $G'$, and since $G'$ is subcubic and $3$-critical,
every vertex of $H$ has degree $3$ in $G'$. Since 
$F$ is a matching in $G'$, 
each vertex $x_i$ is incident
with exactly one edge of $F$. Thus  $d_H(x_i)=2$ for each $i\in[3]$. Every
other vertex of $H$ is incident with no edge of $F$, and therefore has
degree $3$ in $H$. Thus $H$ has exactly three vertices of degree $2$,
namely $x_1,x_2,x_3$, and all other vertices of $H$ have degree $3$.
Also, $H$ is connected because it is a component of $G'-F$.

We show that $H$ is class~$1$. Choose an edge $g\in F$. Since $G'$ is
$3$-critical, $G'-g$ has a proper edge $3$-coloring. Restricting this
coloring to $H$ gives a proper edge $3$-coloring of $H$. Hence $H$ is
class~$1$. This proves (1).

We now prove that $G$ is a nontrivial $3$-critical graph. The subgraph
$H$ is an induced proper subgraph of $G'$, since it is a component of
$G'-F$. Moreover, $H$ has odd order (the number of vertices of odd degree in $H$ is even and it has exactly three vertices of even degree) with 
$\df(H)=\sum_{v\in V(H)}(3-d_H(v))=3$. 
Thus Lemma~\ref{lem:contract-deficiency} applies with $k=3$, and the
graph $G$ obtained by contracting $H$ is $3$-critical.

It remains to check that $G$ is nontrivial. Since $G'$ is nontrivial,
$G'$ has at least three vertices of degree $2$. By assumption, no vertex
of $H$ has degree $2$ in $G'$. Hence all degree-$2$ vertices of $G'$ lie
outside $H$, and their degrees are unchanged when $H$ is contracted. Thus
$G$ has at least three vertices of degree $2$, and so $G$ is nontrivial.
This proves (2).

By the definition of $G$, the vertex $y$ has neighbors $y_1,y_2,y_3$ in
$G$. Moreover, $G'$ is obtained from $G-y$ and $H$ by adding the three
edges $y_i x_i$ for $i\in[3]$. Thus $G'$ is the Meredith-type extension
of $G$ through $H$ at $y$. This proves (3).

We first note that $D^p$ has no rainbow boundary state. Indeed, if
$D^p$ had a proper edge $3$-coloring in which $f_1,f_2,f_3$ received
three distinct colors, then identifying the three pendant ends into a
single vertex would give a proper edge $3$-coloring of $G$, contradicting
that $G$ is class~$2$.

For (4), fix $i\in[3]$, and let $\{i,j,k\}=[3]$. Since $G$ is
$3$-critical, $G-yy_i$ has a proper edge $3$-coloring. The two edges
$yy_j$ and $yy_k$ receive distinct colors. Deleting $y$ and replacing
$yy_j,yy_k$ by the pendant edges $f_j,f_k$, and then coloring $f_i$ with
a color missing at $y_i$, gives a proper edge $3$-coloring of $D^p$.
Since $f_j$ and $f_k$ receive distinct colors, and since $D^p$ has no
rainbow boundary state, the color on $f_i$ must agree with exactly one of
the colors on $f_j$ and $f_k$. Hence $D^p$ realizes a pair state involving
$f_i$. Since this holds for every $i\in[3]$, the graph $D^p$ realizes at
least two pair states. This proves (4).

For (5), let $e\in E(H)$. Since $G'$ is $3$-critical, $G'-e$ has a proper
edge $3$-coloring. Restricting this coloring to the two sides of the cut
gives a proper edge $3$-coloring of $D^p$ and a proper edge
$3$-coloring of $H^p-e$. These two colorings have the same boundary state
on the three cut edges, with $f_i$ corresponding to $e_i$ for each
$i\in[3]$. This common state cannot be a rainbow state, since $D^p$ has
no rainbow boundary state. Therefore the common state is either a pair
state or a monochromatic state. In the first case, $D^p$ and $H^p-e$ have
a common pair state. In the second case, both $D^p$ and $H^p-e$ have a
common monochromatic state. This proves (5).
\end{proof}
\section{Proofs of Theorem~\ref{thm:characterization} and Corollary~\ref{thm:3-critical-order-below-22}}\label{S:characterization}

The proof requires the result below, which will be proved later in this
section.

\begin{lem}\label{lem:snark-reduction}
Let $G$ be a cyclically $3$-edge-connected nontrivial $3$-critical graph of
girth at least $4$. Assume that for every cyclic $3$-edge-cut $F$ of $G$,
each component of $G-F$ contains a vertex of degree $2$ in $G$. Then $G$
has a completion $\widehat G$ to a cyclically $4$-edge-connected cubic
class~$2$ graph of girth at least $4$, obtained as follows:
\begin{enumerate}[(1)]
    \item if $|V(G)|$ is even, then there is a pairing of all degree-$2$
    vertices of $G$ such that adding an edge between the two vertices in
    each pair gives such a completion;

    \item if $|V(G)|$ is odd, then there are three degree-$2$ vertices
    $z_1,z_2,z_3$ of $G$ and a pairing of all remaining degree-$2$
    vertices such that adding a new vertex adjacent to $z_1,z_2,z_3$ and
    adding an edge between the two vertices in each pair gives such a
    completion.
\end{enumerate}
\end{lem}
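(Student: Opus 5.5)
Given the hypotheses of Lemma~\ref{lem:snark-reduction}, we construct a completion $\widehat G$ of $G$ and check three properties: class~2, girth at least $4$, and cyclic $4$-edge-connectivity.

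\textbf{Degree-$2$ vertices.} Let $Z$ be the set of degree-$2$ vertices of $G$. Since $G$ is nontrivial, $|Z|\ge 3$. By VAL (Lemma~\ref{lem:val}), every edge $xy$ satisfies $d(x)+d(y)\ge 5$, so $Z$ is independent. The degree sum $3|V(G)|-|Z|$ is even, so $|Z|\equiv |V(G)|\pmod 2$. This explains the two cases of the statement.

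\textbf{Class 2 is automatic.} Any completion obtained by adding edges (or a new vertex joined to three degree-$2$ vertices) contains $G$ as a subgraph, so it is class~2 whatever pairing we choose. Indeed, a $3$-edge-coloring of $\widehat G$ would restrict to one of $G$. In the odd case, adding a vertex $w$ adjacent to $z_1,z_2,z_3$ makes the order even, and the remaining $|Z|-3$ vertices of $Z$ are even in number. So the whole task is to choose the pairing (and, in the odd case, the triple) so that no triangle or $4$-cycle appears and the result is cyclically $4$-edge-connected.

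\textbf{Choosing the pairing.} We use Hall- or greedy-type counting on an auxiliary graph. Call $z,z'\in Z$ \emph{compatible} if $\mathrm{dist}_G(z,z')\ge 3$ (and, for the new vertex, the three chosen vertices are pairwise at distance at least $3$). Then the added edges create no cycles of length $3$. For length $4$, we must also exclude $\mathrm{dist}=3$, or two added edges lying on a common $4$-cycle; we handle this by requiring distance at least $3$ and forbidding a pair of added edges $zz',uu'$ with $z\sim u$, $z'\sim u'$. Each vertex of $Z$ has at most $2+4=6$ vertices within distance $2$, so for large $|Z|$ a perfect matching in the compatibility graph exists by Dirac-type degree conditions. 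Small $|Z|$ must be handled by a separate argument. I expect this part to be delicate, and I would instead derive the existence of a good pairing from the connectivity argument below.

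\textbf{Cyclic $4$-edge-connectivity.} This is the main obstacle. Suppose $\widehat G$ has a cyclic edge-cut $C$ of size at most $3$ with sides $S$ and $\bar S$. Removing the added edges from $C$ gives an edge-cut $C_G$ of $G$ with $|C_G|\le 3$. By the cut-parity identity $|\delta_{\widehat G}(S)|\equiv 3|S|\pmod 2$, the number of $Z$-vertices in $S$ not matched inside $S$ is controlled. Using $2$-edge-connectivity and cyclic $3$-edge-connectivity of $G$, and the hypothesis that every cyclic $3$-cut of $G$ has a degree-$2$ vertex on each side, we show that both sides of $C_G$ contain cycles and degree-$2$ vertices. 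Then $C_G$ is either a cyclic $3$-cut of $G$, or a $2$-cut, which would contradict criticality via Theorem~\ref{lem:Hajos2}-type state arguments. Hence the pairing must send some $Z$-vertex across $C$, giving $|C|\ge 4$ unless the pairing is badly chosen.

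So the real content is to choose the pairing to \emph{cross} every cyclic $3$-cut of $G$ (and every essential $2$-cut), while also respecting the girth constraint. To do this, I would take a pairing maximizing the number of crossed small cuts, and use an uncrossing/laminarity argument: the cyclic $3$-cuts of a cyclically $3$-edge-connected cubic-like graph form a nested family. Given such a pairing, a $2$-opt swap of two pairs $\{a,b\},\{c,d\}\to\{a,c\},\{b,d\}$ fixes any uncrossed cut without uncrossing others. The odd case is handled the same way, choosing $z_1,z_2,z_3$ on different sides of the cut tree.
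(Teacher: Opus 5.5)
Your proposal has a genuine gap in its central step, plus two smaller omissions.

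\textbf{Main gap: the cuts of $G$ are not nested.} Your exchange step relies on the claim that the cyclic $3$-edge-cuts form a nested family. That is true for cyclically $3$-edge-connected cubic graphs, but false for $G$, because degree-$2$ vertices break it. Suppose a degree-$2$ vertex $r$ subdivides an edge $ab$. Suppose also that, in the graph with $r$ suppressed, $A'\subsetneq B'$ are shores of cyclic $3$-edge-cuts that both contain $ab$, with $a\in A'$ and $b\notin B'$. Then $\delta_G(A'\cup\{r\})$ and $\delta_G(B')$ are crossing cyclic $3$-edge-cuts of $G$ with corner $\{r\}$.

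When a cut $C'$ crosses $C$, the swap $\{a,b\},\{c,d\}\to\{a,c\},\{b,d\}$ can destroy the only pair separating $C'$: take $a,c$ on one side of $C'$ and $b,d$ on the other. There is a second problem. If $r\in S$ is an end of an edge of $\delta_G(S)$, any completion edge at $r$ crosses exactly one of the two cyclic $3$-edge-cuts $\delta_G(S)$ and $\delta_G(S\setminus\{r\})$. So such vertices cannot serve as reliable marks.

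This is exactly what the paper's proof is built around.
\begin{itemize}
\item The Smoothing Lemma (Lemma~\ref{lem:smoothing-laminar}) suppresses every degree-$2$ vertex that is an end of an edge of a cyclic $3$-edge-cut.
\item The corner count (a corner of boundary at most $2$ must be a single degree-$2$ vertex) then shows that the cyclic $3$-edge-cuts of the resulting graph $G^s$ are laminar.
\item The cut tree is marked only by the unsuppressed degree-$2$ vertices $Z$, plus one suppressed vertex $w$ when $|R|$ is odd, for parity.
\item The hypothesis is used to show that every tree edge separates marks, and the remaining suppressed vertices are paired arbitrarily.
\end{itemize}

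Once you are on such a tree, your exchange idea is sound and shorter than the paper's inductive pairing fact. Suppose tree edge $e$ is unseparated and $\{a,b\}$, $\{c,d\}$ lie on opposite sides of $e$. For every other edge $e'$ on the side of $a,b$, both $c$ and $d$ lie on the side of $e'$ containing $e$. So if $\{a,b\}$ separated $e'$, then $\{a,c\}$ or $\{b,d\}$ still does. In the odd case, replace a triple $\{t_1,t_2,t_3\}$ lying on one side of $e$, and a pair $\{c,d\}$ on the other side, by the triple $\{t_1,t_2,c\}$ and the pair $\{t_3,d\}$.

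\textbf{Smaller problem 1: girth.} Girth at least $4$ only forbids triangles, so $4$-cycles are irrelevant. No compatibility graph or Dirac-type count is needed. Apply VAL to an edge $uv$ with $d_G(v)=2$: both other neighbours of $u$ have degree $3$. Hence no two degree-$2$ vertices are adjacent or have a common neighbour. Consequently every pairing, and the extra vertex in the odd case, is automatically triangle-free. As written, your proposal never actually closes the girth requirement.

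\textbf{Smaller problem 2: restricting a small cut of $\widehat G$ to $G$.} You only assert that a cyclic cut $C$ of $\widehat G$ with $|C|\le 3$ induces a cyclic cut of $G$. The paper proves this with the same common-neighbour fact. An acyclic side of an edge-cut of $G$ of size at most $3$ is a tree with at most one vertex of degree $3$, hence it contains at most one degree-$2$ vertex of $G$. Therefore a cycle of $\widehat G$ on that side cannot use completion edges. Independence of $Z$ alone would not suffice. If $z\,u\,z'$ were a path with $d_G(u)=3$ and $z,z'\in Z$, pairing $z$ with $z'$ would create a triangle cut off by three edges.

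Finally, a cyclic $2$-edge-cut of $G$ is excluded directly by cyclic $3$-edge-connectivity; no Haj\'os-type argument is needed.
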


\begin{proof}[Proof of Theorem~\ref{thm:characterization}]
Let $G$ be an $n$-vertex nontrivial $3$-critical graph.

If $G$ contains an independent triangle $T$, then by
Lemma~\ref{lem:blow-up} the graph $G'$ obtained from $G$ by contracting
$T$ to a single vertex $x$ is $3$-critical. Moreover, $G'$ is nontrivial,
since the  independent
triangle can have at most one vertex that has degree 2 in $G$ by Vizing's Adjacency Lemma
and in that case, the new contracted vertex also has degree 2. Thus $G$ is obtained from $G'$ by a vertex-blowup at $x$, and
Statement~(a) holds.

Suppose next that $G$ contains a triangle $xyzx$ that is not independent
in $G$. Since $\Delta(G)=3$ and $G\ne K_4$, there is a vertex
$w\in V(G)\setminus\{x,y,z\}$ adjacent to exactly two vertices of the
triangle; say $wx,wy\in E(G)$. Let
\[
    F=\delta_G(\{x,y,z,w\}).
\]
The vertices $x$ and $y$ already have degree $3$, and only $z$ and $w$
can send edges to $V(G)\setminus\{x,y,z,w\}$. Hence $|F|\le 2$. Since
$G$ is connected and has no bridge, and since $G$ is nontrivial, the set
$V(G)\setminus\{x,y,z,w\}$ is nonempty and $|F|\ne 0,1$. Thus $|F|=2$.
Consequently both $z$ and $w$ have degree $3$.  Since $G$ 
is 2-edge-connected and $|F|=2$, it follows that $G-F$ has exactly two components. 
Since $G$ is nontrivial,
all degree-$2$ vertices of $G$ lie in the other component of $G-F$; in
particular, that component has at least three vertices and contains an
edge. The set $\{x,y,z,w\}$ also induces a subgraph containing a cycle.
Hence $F$ is an essential $2$-edge-cut. By Lemma~\ref{lem:Hajos2}, the
graph $G$ is obtained from two smaller $3$-critical graphs by a
Haj\'os-join, at least one of which is nontrivial. The same conclusion
holds if $G$ has any cyclic $2$-edge-cut. In either case Statement~(b)
holds.

We may therefore assume that $G$ has girth at least $4$ and is cyclically
$3$-edge-connected.

Suppose that $G$ has a cyclic $3$-edge-cut $F$ such that one component
$H$ of $G-F$ contains no vertex of degree $2$ in $G$. By
Lemma~\ref{lem:cyclic-cut-reduction}(2), contracting $H$ to a single vertex
yields a  $3$-critical graph $G'$, and $G$ is recovered from
$G'$ by a Meredith-type extension at that vertex by Lemma~\ref{lem:cyclic-cut-reduction}(3). 
The graph $G'$ has at least three vertices of degree 2. 
Indeed, all degree-$2$
vertices of $G$ lie outside $H$, and so they remain in $G'$, 
and so  $G'$ is nontrivial.
Thus Statement~(c) holds.

Finally, assume that $G$ has girth at least $4$, is cyclically
$3$-edge-connected, and that for every cyclic $3$-edge-cut $F$ each
component of $G-F$ contains a vertex of degree $2$ in $G$. By
Lemma~\ref{lem:snark-reduction}, $G$ can be completed to a snark
$\widehat G$. The completion described in Lemma~\ref{lem:snark-reduction}
gives Statement~(d) when $n$ is odd and Statement~(e) when $n$ is even.
\end{proof}

\begin{proof}[Proof of Corollary~\ref{thm:3-critical-order-below-22}]
Suppose, for a contradiction, that the theorem is false, and choose a
$3$-critical graph $G$ of even order $n<18$ with $n$ minimum.

First suppose that $G$ is trivial. Then $G$ contains a $3$-overfull
subgraph $H$. Since $G$ is edge-critical, $H=G$; otherwise, deleting an
edge of $G$ outside $H$ would leave the same overfull subgraph and hence
would still be class $2$, contradicting criticality. Thus $G$ itself is
overfull. But a subcubic overfull graph has odd order and exactly one
vertex of degree $2$, a contradiction. Hence $G$ is nontrivial.

Apply Theorem~\ref{thm:characterization} to $G$.

If Theorem~\ref{thm:characterization}(a) holds, then $G$ is obtained from
a nontrivial $3$-critical graph of order $n-2$ by a vertex-blowup. Since
$n$ is even, $n-2$ is even, contradicting the minimality of $n$.

If Theorem~\ref{thm:characterization}(b) holds, then $G$ is obtained
from two smaller $3$-critical graphs $G_1$ and $G_2$ by a Haj\'os-join.
A Haj\'os-join has order $|V(G_1)|+|V(G_2)|-1$. 
Since this number is even, exactly one of $|V(G_1)|$ and $|V(G_2)|$ is
even. That even-order factor is a smaller even-order $3$-critical graph,
again contradicting the minimality of $n$.

If Theorem~\ref{thm:characterization}(c) holds, then $G$ is obtained
from a nontrivial $3$-critical graph $G_0$ by a Meredith-type extension
using a class~1 graph $H$ with exactly three degree-$2$ connector
vertices and all other vertices of degree $3$. Since $\sum_{v\in V(H)} d_H(v)=3|V(H)|-3$
is even, $|V(H)|$ is odd. Therefore the extension changes the order by
$|V(H)|-1$, 
which is even. Thus $G_0$ has even order smaller than $n$, contradicting
the minimality of $n$.

Theorem~\ref{thm:characterization}(d) cannot occur because $n$ is even.

It remains to consider the case in Theorem~\ref{thm:characterization}(e). Then $G$ is obtained from a snark $S$ of order $n$ by deleting some edges. Since the Petersen graph is the only snark of order less than 18~\cite{Preissmann1982}, $S$ must be the Petersen graph. However, the only 3-critical subgraph of the Petersen graph is the graph obtained by deleting one vertex, a contradiction.

All cases lead to contradictions. Therefore no even-order $3$-critical
graph of order less than $18$ exists.
\end{proof}

\subsection{Proof of Lemma~\ref{lem:snark-reduction}}

In the remainder of this section, we prove Lemma~\ref{lem:snark-reduction}.
A set $S\subseteq V(G)$ is called a \emph{minimal cyclic $3$-shore} if
$\delta_G(S)$ is a cyclic $3$-edge-cut, $G[S]$ contains a cycle, and no
proper subset of $S$ has both properties.

The next result shows that, although the cyclic $3$-edge-cuts of a
cyclically $3$-edge-connected $3$-critical graph need not be laminar in
$G$ itself, they become laminar after smoothing certain degree-$2$
vertices.

\begin{lem}[Smoothing Lemma] \label{lem:smoothing-laminar} 
Let $G$ be a cyclically $3$-edge-connected  $3$-critical graph.
Form $H$ by repeatedly smoothing a degree-$2$ vertex whenever it is an
endvertex of an edge belonging to a cyclic $3$-edge-cut. Then the cyclic
$3$-edge-cuts of $H$ are laminar.
\end{lem}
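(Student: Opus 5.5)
We argue by contradiction via the standard uncrossing argument, applied after the smoothing has been carried out. Let $H$ be the graph produced by the smoothing process. First record the basic properties that survive smoothing. $H$ may be a multigraph, but it is still subcubic and $2$-edge-connected, and every vertex of degree $2$ in $H$ is a degree-$2$ vertex of $G$ that lies on no edge of any cyclic $3$-edge-cut. Moreover, cyclic edge-cuts of $H$ correspond bijectively to cyclic edge-cuts of $G$ of the same size: an edge of $H$ arising from a smoothed path is replaced by any one edge of that path. Hence $H$ is cyclically $3$-edge-connected. We also use that $G$ is $3$-critical, so by Vizing's Adjacency Lemma (Lemma~\ref{lem:val}) no two degree-$2$ vertices of $G$ are adjacent; consequently each smoothed path has length exactly $2$, with a single interior vertex.

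Now suppose two cyclic $3$-edge-cuts $\delta_H(A)$ and $\delta_H(B)$ of $H$ cross, meaning all four corners $A\cap B$, $A\setminus B$, $B\setminus A$ and $\overline{A\cup B}$ are nonempty. Submodularity of the cut function, together with its posimodular counterpart, gives
\[
|\delta(A\cap B)|+|\delta(A\cup B)|\le 6
\qquad\text{and}\qquad
|\delta(A\setminus B)|+|\delta(B\setminus A)|\le 6 .
\]
Since $H$ is $2$-edge-connected, every corner has boundary of size at least $2$. We then count edges across the diagonal between opposite corners. In a subcubic graph with $3$-cuts, the standard parity and degree counting leaves only a few configurations. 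In each of them, some corner $X$ has $|\delta(X)|\le 3$ and $G[X]$ is small: a single vertex, or a path, or a set with $|\delta(X)|=2$.

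The key step is to rule these configurations out, and this is where $3$-criticality and the smoothing rule enter. Suppose some corner $X$ with $|\delta(X)|=2$ contains a cycle while its complement also does. This is a cyclic $2$-edge-cut, contradicting cyclic $3$-edge-connectivity. If instead the $2$-cut is not cyclic, then $X$ or its complement induces a forest with only two boundary edges. In a subcubic $2$-edge-connected graph this forces that side to be a single vertex of degree $2$ whose two incident edges lie in $\delta(A)$ or $\delta(B)$. Such a vertex is a degree-$2$ endvertex of an edge of a cyclic $3$-edge-cut, so it would have been smoothed, which is a contradiction. If a corner is a single vertex of degree $3$, then comparing the remaining corner cuts yields a cyclic cut of size at most $2$, or shows that one of $\delta(A),\delta(B)$ was not cyclic. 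Either outcome is a contradiction.

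I expect the main obstacle to be exactly this case analysis. One must verify that every crossing configuration produces either a cyclic cut of size at most $2$ or a degree-$2$ vertex on a cyclic $3$-cut in $H$. One must also handle the subtlety that smoothing may create new cyclic $3$-cuts, or parallel edges, whose status relative to $G$ has to be tracked. To make this tractable, I would first prove that every cyclic $3$-cut of $H$ lifts to a cyclic $3$-cut of $G$ whose edges avoid the smoothed vertices. With that in hand, any degree-$2$ vertex appearing in a crossing corner can be traced back to $G$, where it contradicts the smoothing rule.
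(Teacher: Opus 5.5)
Your outline matches the paper's proof in structure. You take crossing cyclic $3$-edge-cuts $\delta_H(A),\delta_H(B)$, bound the corner boundaries, find a corner with two boundary edges, show it is acyclic, and reduce it to a single degree-$2$ vertex whose edges lie in $\delta_H(A)\cup\delta_H(B)$, contradicting the stopping rule.

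\textbf{The gap.} You never show that some corner has boundary of size exactly $2$. You appeal to ``standard parity and degree counting'' and then propose to handle a corner that is a single degree-$3$ vertex by asserting, without argument, that ``comparing the remaining corner cuts'' gives a contradiction. The path case is not treated at all. The paper closes this with a short parity argument that removes those cases. Put $X=A\cap B$, $Y=A\setminus B$, $Z=B\setminus A$, $W=V(H)\setminus(A\cup B)$. Your submodular and posimodular inequalities are really the identities $|\delta_H(X)|+|\delta_H(W)|=6-2e_{YZ}$ and $|\delta_H(Y)|+|\delta_H(Z)|=6-2e_{XW}$. If every corner had boundary at least $3$, all four boundaries would equal $3$ and $e_{XW}=e_{YZ}=0$. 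Then $|\delta_H(X)|=e_{XY}+e_{XZ}=3$ and $|\delta_H(Y)|=e_{XY}+e_{YW}=3$ give $e_{XZ}=e_{YW}$. But $|\delta_H(A)|=e_{XZ}+e_{YW}=3$ is odd, a contradiction. Hence some corner has exactly two boundary edges; connectivity and bridgelessness exclude fewer. Only that case needs handling. That corner is acyclic because the complement of every corner contains $A$ or $V(H)\setminus A$, and hence contains a cycle.

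\textbf{Smaller repairs.} First, for an acyclic corner with two boundary edges, the degree count only shows it is a path whose vertices all have degree $2$ in $H$. Your claim that this ``forces a single vertex'' in any subcubic $2$-edge-connected graph is false. You need that no two degree-$2$ vertices of $H$ are adjacent. This follows from VAL in $G$, plus the observation that smoothing $v$ joins its two neighbors, which have degree $3$, so no such adjacency is ever created.

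Second, your proposed lifting lemma is false. It says each cyclic $3$-cut of $H$ lifts to a cyclic $3$-cut of $G$ whose edges avoid the smoothed vertices. If the cut contains a smoothed edge $xy$, the lift must use $xv$ or $vy$. The lemma is also unnecessary. The stopping rule refers to cyclic $3$-edge-cuts of the current graph, so a degree-$2$ vertex of $H$ incident with an edge of $\delta_H(A)\cup\delta_H(B)$ contradicts the construction of $H$ directly. Likewise, cyclic cuts of $H$ and $G$ do not correspond bijectively, though lifting does show that cyclic $3$-edge-connectivity is preserved.
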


\begin{proof}
Since $G$ is $3$-critical, we have $\Delta(G)=3$, $\delta(G)\ge 2$, and
$G$ has no bridge. By Vizing's Adjacency Lemma, no two degree-$2$
vertices of $G$ are adjacent, and every degree-$3$ vertex of $G$ has at
most one neighbor of degree $2$.

We first record that the needed properties are preserved by the smoothing
process. Let $Q$ be a current graph in the process, and suppose that a
degree-$2$ vertex $v$ is smoothed. Let the neighbors of $v$ be $x$ and
$y$. Since no two degree-$2$ vertices of $Q$ are adjacent, both $x$ and
$y$ have degree $3$. 

Also, by Lemma~\ref{lem:cyc-2-3-cut-property}(2),  for any cyclic 3-edge-cut 
$F$ in $Q$, $Q-F$ has exactly two components and  $F$ is a matching in $Q$.

The smoothing does not create parallel edges. Indeed, suppose that
$xy\in E(Q)$. Since $v$ is smoothed, one of the edges incident with $v$,
say $xv$, belongs to a cyclic $3$-edge-cut. In the corresponding shore,
the vertices $x$ and $v$ lie on different sides. If $y$ lies on the same
side as $v$, then $xy$ also lies in the cut, and the two cut-edges $xv$
and $xy$ have the common end $x$. If $y$ lies on the same side as $x$,
then $vy$ also lies in the cut, and the two cut-edges $xv$ and $vy$ have
the common end $v$. Both cases contradict the matching property above.
Therefore $x$ and $y$ are nonadjacent before the smoothing.

The degree-$2$ adjacency property is also preserved. Smoothing $v$ deletes
the degree-$2$ neighbor $v$ of $x$ and $y$ and replaces it by a
degree-$3$ neighbor. Thus no two degree-$2$ vertices become adjacent, and
no degree-$3$ vertex gains a second degree-$2$ neighbor.

Bridgelessness is preserved as well. Indeed, if an old edge became a
bridge after the smoothing, then it was already a bridge before the
smoothing. If the new edge $xy$ were a bridge after the smoothing, then
in the previous graph one of the two edges $xv$ and $vy$ would be a
bridge, again a contradiction.

Finally, cyclically $3$-edge-connectedness is preserved. If smoothing
$v$ created a cyclic edge-cut of size at most $2$, then replacing the new
edge $xy$ by the path $xvy$, and placing $v$ on one of the two sides,
would give a cyclic edge-cut of the same size in the previous graph. This
contradicts cyclically $3$-edge-connectedness of the previous graph.

The process terminates because each smoothing reduces the number of
vertices. Thus  
 $H$ is subcubic, simple, bridgeless, cyclically
$3$-edge-connected, no two degree-$2$ vertices of $H$ are adjacent, every
degree-$3$ vertex of $H$ has at most one neighbor of degree $2$, and no
degree-$2$ vertex of $H$ is an endvertex of an edge belonging to a cyclic
$3$-edge-cut of $H$.

Suppose, for a contradiction, that $H$ has two crossing cyclic
$3$-edge-cuts. Let these cuts be $\delta_H(A)$ and $\delta_H(B)$. Put
\[
X=A\cap B,\qquad
Y=A\setminus B,\qquad
Z=B\setminus A,\qquad
W=V(H)\setminus(A\cup B).
\]
See Figure~\ref{fig:laminarity-Gstar} for an illustration of these four sets. 

Since $A$ and $B$ cross, all four sets $X,Y,Z,W$ are nonempty. Write
\[
e_{XY}=e_H(X,Y),\quad e_{XZ}=e_H(X,Z),\quad e_{XW}=e_H(X,W),
\]
and
\[
e_{YZ}=e_H(Y,Z),\quad e_{YW}=e_H(Y,W),\quad e_{ZW}=e_H(Z,W).
\]

% In the preamble, include:
% \usetikzlibrary{fit,backgrounds}

\begin{figure}[ht]
\centering
\begin{tikzpicture}[
    region/.style={
        draw,
        rounded corners=5pt,
        minimum width=3.05cm,
        minimum height=1.05cm,
        align=center,
        font=\large
    },
    edge/.style={thick},
    elab/.style={font=\small, fill=white, inner sep=1pt},
    setbox/.style={
        draw,
        dashed,
        very thick,
        rounded corners=6pt,
        inner sep=8pt
    }
]

\node[region, fill=blue!8]   (X) at (0, 2.0) {$X=A\cap B$};
\node[region, fill=green!8]  (Y) at (-3.7, 0) {$Y=A\setminus B$};
\node[region, fill=orange!10](Z) at (3.7, 0) {$Z=B\setminus A$};
\node[region, fill=purple!8] (W) at (0,-2.0) {$W=V(H)\setminus(A\cup B)$};

\begin{scope}[on background layer]
    \node[setbox, blue!75, fit=(X)(Y), label={[blue!75]above left:$A=X\cup Y$}] {};
    \node[setbox, orange!90!black, fit=(X)(Z), label={[orange!90!black]above right:$B=X\cup Z$}] {};
\end{scope}

\draw[edge] (X) -- node[elab, above left] {$e_{XY}$} (Y);
\draw[edge] (X) -- node[elab, above right] {$e_{XZ}$} (Z);
\draw[edge] (X) -- node[elab, right, pos=.58] {$e_{XW}$} (W);

\draw[edge] (Y) to[out=18,in=162]
    node[elab, above, pos=.53] {$e_{YZ}$} (Z);

\draw[edge] (Y) -- node[elab, below left] {$e_{YW}$} (W);
\draw[edge] (Z) -- node[elab, below right] {$e_{ZW}$} (W);

\end{tikzpicture}
\caption{The four sets determined by two crossing shores $A$ and $B$.}
\label{fig:laminarity-Gstar}
\end{figure}

Since $\delta_H(A)$ and $\delta_H(B)$ both have size $3$, we have
\begin{equation}\label{eq:Gstar-cutAB}
e_{XZ}+e_{XW}+e_{YZ}+e_{YW}=3
\qquad\text{and}\qquad
e_{XY}+e_{XW}+e_{YZ}+e_{ZW}=3.
\end{equation}
The boundary sizes of the four corners are
\[
|\delta_H(X)|=e_{XY}+e_{XZ}+e_{XW},
\]
\[
|\delta_H(Y)|=e_{XY}+e_{YZ}+e_{YW},
\]
\[
|\delta_H(Z)|=e_{XZ}+e_{YZ}+e_{ZW},
\]
and
\[
|\delta_H(W)|=e_{XW}+e_{YW}+e_{ZW}.
\]
Adding these four equalities gives
\[
|\delta_H(X)|+|\delta_H(Y)|+|\delta_H(Z)|+|\delta_H(W)|
=
2(e_{XY}+e_{XZ}+e_{XW}+e_{YZ}+e_{YW}+e_{ZW}).
\]
On the other hand, from \eqref{eq:Gstar-cutAB},
\[
e_{XY}+e_{XZ}+e_{YW}+e_{ZW}+2e_{XW}+2e_{YZ}=6.
\]
Thus
\begin{equation}\label{eq:Gstar-corner-sum}
|\delta_H(X)|+|\delta_H(Y)|+|\delta_H(Z)|+|\delta_H(W)|
=
12-2e_{XW}-2e_{YZ}\le 12.
\end{equation}

We claim that some corner has boundary size at most $2$. Indeed, if all
four corners had boundary size at least $3$, then by
\eqref{eq:Gstar-corner-sum} all four boundary sizes would be exactly $3$,
and also
\[
e_{XW}=e_{YZ}=0.
\]
Then \eqref{eq:Gstar-cutAB} gives
\[
e_{XZ}+e_{YW}=3
\qquad\text{and}\qquad
e_{XY}+e_{ZW}=3.
\]
The four corner-boundary equalities become
\[
e_{XY}+e_{XZ}=3,\qquad
e_{XY}+e_{YW}=3,
\]
and
\[
e_{XZ}+e_{ZW}=3,\qquad
e_{YW}+e_{ZW}=3.
\]
From the first two equations, we get $e_{XZ}=e_{YW}$, which contradicts
$e_{XZ}+e_{YW}=3$. Hence some corner has boundary size at most $2$.

By replacing $A$ or $B$ by its complement, and by interchanging $A$ and
$B$ if necessary, we may assume that this corner is $Z$. Indeed, if the
small corner is $X$, replace $A$ by $V(H)\setminus A$; if it is $Y$,
interchange $A$ and $B$; and if it is $W$, replace $B$ by
$V(H)\setminus B$. Since $\delta_H(A)$ and $\delta_H(B)$ are cyclic cuts,
their complementary shores are also cyclic shores, and crossing is
preserved. Therefore we may assume that $|\delta_H(Z)|\le 2$.

Since $H$ is connected and bridgeless, and since $Z$ is nonempty and
proper, we have $|\delta_H(Z)|=2$. 
If $H[Z]$ contained a cycle, then $\delta_H(Z)$ would be a cyclic
$2$-edge-cut, contradicting cyclically $3$-edge-connectedness. Hence
$H[Z]$ is acyclic.

Also $H[Z]$ is connected. Otherwise one component of $H[Z]$ would have
boundary size at most $1$, contradicting that $H$ is connected and
bridgeless. Hence $H[Z]$ is a tree. Since $|\delta_H(Z)|=2$, we have
\[
\sum_{v\in Z}d_H(v)
=
2e_H(Z)+|\delta_H(Z)|
=
2(|Z|-1)+2
=
2|Z|.
\]
As $H$ is bridgeless, every vertex of $H$ has degree at least $2$. Hence
every vertex of $Z$ has degree exactly $2$ in $H$.

If $|Z|\ge 2$, then the tree $H[Z]$ contains an edge whose two ends both
have degree $2$ in $H$, contradicting that no two degree-$2$ vertices of
$H$ are adjacent. Therefore $Z$ consists of a single vertex, say
$Z=\{z\}$, and $d_H(z)=2$.

Now the two edges incident with $z$ lie in the boundaries of the crossing
cyclic $3$-cuts $\delta_H(A)$ and $\delta_H(B)$. Indeed, an edge from
$z$ to $X$ lies in $\delta_H(A)$, an edge from $z$ to $W$ lies in
$\delta_H(B)$, and an edge from $z$ to $Y$ lies in both $\delta_H(A)$ and
$\delta_H(B)$. Thus $z$ is a degree-$2$ endvertex of an edge belonging to
a cyclic $3$-edge-cut of $H$. This contradicts the choice of the final
graph $H$, in which no such degree-$2$ vertex remains.

Therefore no two cyclic $3$-edge-cuts of $H$ cross. Hence the cyclic
$3$-edge-cuts of $H$ are laminar.
\end{proof}

\begin{proof}[Proof of Lemma~\ref{lem:snark-reduction}]
Since $G$ is $3$-critical, we have $\Delta(G)=3$, $\delta(G)\ge 2$, and
$G$ is class $2$. By Vizing's Adjacency Lemma, no two degree-$2$ vertices
of $G$ are adjacent. Moreover, if a degree-$3$ vertex $u$ is adjacent to
a degree-$2$ vertex, then the other two neighbors of $u$ have degree
$3$. Therefore no two degree-$2$ vertices of $G$ have a common neighbor.

Let $G^s$ be the graph obtained from $G$ by the smoothing process in
Lemma~\ref{lem:smoothing-laminar}. Thus $G^s$ is simple, its cyclic
$3$-edge-cuts are laminar, and no degree-$2$ vertex of $G^s$ is an
endvertex of an edge belonging to a cyclic $3$-edge-cut. Let $Z$ be the
set of degree-$2$ vertices of $G^s$, and let $R$ be the set of degree-$2$
vertices of $G$ which are smoothed out in the process. Hence the
degree-$2$ vertices of $G$ are exactly the vertices in $Z\cup R$.

We shall use the following consequence of Lemma~\ref{lem:smoothing-laminar}:
if $G^s$ has a cyclic $3$-edge-cut, then both sides of every cyclic
$3$-edge-cut of $G^s$ contain a vertex of $Z$. Indeed, if
one side of such a cut contained no vertex of $Z$, then lifting the cut
back to $G$ and putting all smoothed boundary vertices on the other side
would give a cyclic $3$-edge-cut of $G$ whose one side contains no
degree-$2$ vertex, contradicting the hypothesis.

We first dispose of the case where $G$ has no cyclic $3$-edge-cut. In
this case no crossing condition is needed. If the number of degree-$2$
vertices of $G$ is even, pair them arbitrarily and add one edge for each
pair. If the number of degree-$2$ vertices of $G$ is odd, choose any
three degree-$2$ vertices, add a new vertex adjacent to them, and pair
the remaining degree-$2$ vertices arbitrarily. Let $\widehat G$ be the
resulting graph. Then $\widehat G$ is cubic. The arguments below show
that $\widehat G$ is simple, has girth at least $4$, and is class $2$.
Moreover, the final cyclic-connectivity argument below would imply that
any cyclic edge-cut of $\widehat G$ of size at most $3$ gives a cyclic
$3$-edge-cut of $G$, a contradiction. Hence in this case $\widehat G$ is
cyclically $4$-edge-connected.

Thus we may assume from now on that $G$ has a cyclic $3$-edge-cut. Then
$G^s$ also has a cyclic $3$-edge-cut, and so $Z\ne\emptyset$.

\medskip
\noindent\textbf{The cut tree.}
We now choose the marks to which the tree pairing will be applied. If
$|R|$ is even, put
\[
X=Z
\qquad\text{and}\qquad
R_0=R.
\]
If $|R|$ is odd, choose one vertex $w\in R$, and put
\[
X=Z\cup\{w\}
\qquad\text{and}\qquad
R_0=R\setminus\{w\}.
\]
 In both
cases, $R_0$ has even size. The vertices in $R_0$ will not be used in the
cut tree; they will be paired arbitrarily at the end.

We construct a marked tree $T$ for the active mark set $X$. First suppose
$|R|$ is even. Since the cyclic $3$-edge-cuts of $G^s$ are laminar, fix
one vertex $z_0\in Z$, and for each cyclic $3$-edge-cut of $G^s$ choose
the shore not containing $z_0$. These chosen shores form a laminar family
$\mathcal L$. Adjoin $V(G^s)$ as a root and order
$\mathcal L\cup\{V(G^s)\}$ by inclusion. For
$A,B\in\mathcal L\cup\{V(G^s)\}$, call $B$ a child of $A$ if
$B\subsetneq A$ and no member of $\mathcal L\cup\{V(G^s)\}$ lies strictly
between $B$ and $A$. This gives a rooted tree $T$ whose edges correspond
to the cyclic $3$-edge-cuts of $G^s$. Place each vertex of $Z$ at the
smallest member of $\mathcal L\cup\{V(G^s)\}$ containing it. Since both
sides of every cyclic $3$-edge-cut of $G^s$ contain a vertex of $Z$,
every edge of $T$ separates marks from $X=Z$.

Now suppose $|R|$ is odd. Let $w\in R$ be the chosen smoothed-out
degree-$2$ vertex, and let $a$ and $b$ be the two neighbors of $w$ in
the original graph $G$. Thus, in $G^s$, the path $awb$ has been smoothed
to a single edge $ab$.

We use the same laminar cut tree for the cyclic $3$-edge-cuts of $G^s$,
but we place the additional mark $w$ at the edge-position corresponding
to the smoothed edge $ab$. Equivalently, in the tree representation, we
subdivide this position once and put the mark $w$ at the new subdivision
vertex. This is a single refinement, not a separate subdivision for each
cut containing $ab$.

Let $Q=\delta_{G^s}(A)$ be a cyclic $3$-edge-cut of $G^s$. If
$ab\notin Q$, then $a$ and $b$ lie on the same side of $Q$, and the mark
$w$ is placed on that same side. If $ab\in Q$, then exactly one of
$a,b$ lies in $A$. Say, after relabeling, that $a\in A$ and
$b\notin A$. After opening $ab$ back to the path $awb$, the cut $Q$ has
two possible lifted shores: one uses the edge $aw$, and the other uses
the edge $wb$. These two lifted shores are nested, corresponding to
placing $w$ on one side of the cut or on the other. Thus every cyclic
$3$-edge-cut of $G^s$ using the smoothed edge $ab$ is recorded by the
same edge-position $ab$, with $w$ inserted at that position.

It remains to check that every edge of the refined tree separates the
active mark set $X=Z\cup\{w\}$. 
For tree edges not affected by the insertion of $w$, this was already
true because both sides of the corresponding cyclic $3$-edge-cut of
$G^s$ contain vertices of $Z$. For the two new tree edges created at the
position of $ab$, the two open sides of the corresponding topological cut
in $G^s$ already contain vertices of $Z$. Hence one new tree edge
separates a side containing a vertex of $Z$ from a side containing $w$
and another vertex of $Z$, while the other new tree edge separates a
side containing $w$ and a vertex of $Z$ from a side containing another
vertex of $Z$. Therefore both new tree edges also separate marks of
$X$.

Thus, also in the case where $|R|$ is odd, we obtain a finite marked
tree $T$ such that every edge of $T$ separates marks from
$X=Z\cup\{w\}$.

\medskip
\noindent\textbf{A pairing fact for marked trees.}
We use the following elementary fact. Let $T$ be a finite tree whose marks
are vertices, possibly with several marks at the same vertex. Suppose
that every edge of $T$ separates marks. If the number of marks is even,
then the marks can be paired so that every edge of $T$ separates at least
one pair. If the number of marks is odd, then three marks can be chosen
and the remaining marks can be paired so that every edge of $T$ either
separates one of the pairs or separates the three chosen marks
nontrivially, that is, each component of $T-e$ contains at least one of
the three chosen marks.

We prove the fact by induction on the number of vertices of $T$. First
we reduce to the case where every mark lies at a leaf and every leaf
contains exactly one mark. Indeed, attach a new leaf for each mark and
move the mark to that new leaf. If the desired pairing, or the desired
choice of three marks together with a pairing of the remaining marks,
works in the enlarged tree, then it also works for the original tree
after deleting the added leaf edges. Thus we may assume that every mark
is a leaf-mark. In particular, every leaf of $T$ contains a mark, since
an unmarked leaf would give an incident edge whose leaf side contains no
mark.

If $T$ has at most three marks, the result is immediate. If there are two
marks, pair them. If there are three marks, choose all three as the
special marks. Thus assume that $T$ has at least four marks in the even
case and at least five marks in the odd case.

Let $z$ be a leaf of $T$, and let $m_z$ be the mark at $z$. Delete $z$,
and then repeatedly delete any unmarked leaf that is created. Let $T'$ be
the resulting tree. Then $T'$ contains all marks except $m_z$, and every
leaf of $T'$ contains a mark. Hence every edge of $T'$ separates the
remaining marks: each component of $T'-e$ contains a leaf of $T'$, and
each leaf of $T'$ contains a mark. Therefore the induction hypothesis
applies to $T'$. Let $r$ be the unique vertex of $T'$ at which the
deleted path from $z$ meets $T'$.

First suppose that the number of marks of $T$ is odd. Then the number of
marks of $T'$ is even. By induction, the marks of $T'$ can be paired so
that every edge of $T'$ separates at least one pair. Choose one pair
$\{a,b\}$ from this pairing. In $T$, choose the three special marks
$m_z,a,b$, and keep all pairs of the inductive pairing except
$\{a,b\}$.

Every edge on the deleted path from $z$ to $r$ separates $m_z$ from
$a,b$, and hence separates the three special marks nontrivially. Now let
$e\in E(T')$. If $e$ is separated by one of the kept pairs, then it is
covered. Otherwise, in the inductive pairing, the edge $e$ was separated
by the removed pair $\{a,b\}$. Hence $a$ and $b$ lie in different
components of $T'-e$, and so the three special marks $m_z,a,b$ are
separated nontrivially by $e$. This proves the odd case.

Now suppose that the number of marks of $T$ is even. Then the number of
marks of $T'$ is odd. By induction, there are three special marks
$a,b,c$ in $T'$, and the remaining marks of $T'$ can be paired so that
every edge of $T'$ is either separated by one of the pairs or separates
$a,b,c$ nontrivially.

Let $R'$ be the minimal subtree of $T'$ containing $a,b,c$. Choose a
vertex $u\in V(R')$ as follows. If one of $a,b,c$ lies on the path
between the other two, then let $u$ be that mark. Otherwise, let $u$ be
the unique vertex of degree $3$ in $R'$. Thus $R'$ is the union of the
three paths from $u$ to $a,b,c$, where one of these paths may be trivial.

Relabel $a,b,c$ so that $a\ne u$ and the component of $T'-u$ containing
$a$ does not contain $r$. This is possible as follows. If $u$ is not one
of the three marks $a,b,c$, then the three paths from $u$ to $a,b,c$
start in three distinct components of $T'-u$, and at most one of these
components contains $r$. Hence one may choose $a$ from a component not
containing $r$. If $u$ is one of the three marks, then the other two
marks lie in two distinct components of $T'-u$; again at most one of
these components contains $r$, so one of the other two marks can be
chosen as $a$.

Pair $m_z$ with $a$, pair $b$ with $c$, and keep all pairs given by the
induction on $T'$. Every edge on the deleted path from $z$ to $r$ is
separated by the pair $\{m_z,a\}$. Now let $e\in E(T')$. If $e$ is
separated by one of the inductive pairs, then it is covered. Otherwise,
by the induction hypothesis, $e$ separates the three marks $a,b,c$
nontrivially. Hence $e$ lies in $R'$, and more precisely $e$ lies on
exactly one of the paths from $u$ to $a$, from $u$ to $b$, or from $u$ to
$c$.

If $e$ lies on the path from $u$ to $b$, or on the path from $u$ to $c$,
then the pair $\{b,c\}$ separates $e$. It remains to consider the case
where $e$ lies on the path from $u$ to $a$. Let $A_e$ and $B_e$ be the
two components of $T'-e$, with $a\in A_e$ and $u,b,c\in B_e$. Since $e$
lies on the path from $u$ to $a$, the component $A_e$ is contained in the
component of $T'-u$ containing $a$. By the choice of $a$, this component
does not contain $r$. Hence $r\in B_e$.

In the original tree $T$, the deleted path from $z$ to $r$ meets $T'$
only at $r$. Therefore, after deleting $e$ from $T$, the mark $m_z$ lies
in the component containing $r$, and hence in the component containing
$u,b,c$. Thus $m_z$ and $a$ lie in different components of $T-e$, so the
pair $\{m_z,a\}$ separates $e$. This proves the even case, and hence the
pairing fact.

\medskip
\noindent\textbf{Choosing the completion.}
Apply the pairing fact to the marked tree $T$ with mark set $X$. Since
$|R_0|$ is even, pair the vertices of $R_0$ arbitrarily.

If $|X|$ is even, pair the marks of $X$ as given by the even case of the
pairing fact, and add one new edge for each pair, including the arbitrary
pairs in $R_0$.

If $|X|$ is odd, choose three special marks in $X$ and pair the remaining
marks of $X$ as given by the odd case of the pairing fact. Add a new
vertex $x$ adjacent to the three special marks, and add one new edge for
each pair, including the arbitrary pairs in $R_0$.

Let $\widehat G$ be the resulting graph. Every degree-$2$ vertex of $G$
receives exactly one new incident edge, and in the odd case the new
vertex $x$ has degree $3$. Hence $\widehat G$ is cubic. We call the added
edges, and in the odd case the three edges incident with $x$, the
\emph{completion edges}.

\medskip
\noindent\textbf{The crossing property.}
We claim that every cyclic $3$-edge-cut of $G$ is crossed by at least one
completion edge.

Let $C$ be a cyclic $3$-edge-cut of $G$. 
If $|R|$ is even, project $C$ onto the corresponding cyclic $3$-edge-cut of $G^s$ by replacing each edge of $C$ with its corresponding smoothed edge in $G^s$. More precisely, if $uv\in C$ and $d_G(u)=d_G(v)=3$, then $uv\in E(G^s)$. If one of $u$ and $v$ has degree 2 in $G$, say $d_G(v)=2$, then $uv$ is replaced by $uw\in E(G^s)$, where $w$ is the other neighbor of $v$ in $G$.
 If
$|R|$ is odd, do the same projection for all vertices of $R_0$, but keep
the chosen vertex $w$ open. Thus in both cases $C$ determines an edge
$e_T$ of the marked tree $T$.

If $|X|$ is even, then by the pairing fact some paired pair of marks in
$X$ is separated by $e_T$. The corresponding completion edge crosses
$C$.

If $|X|$ is odd, then either $e_T$ separates one of the paired pairs, or
$e_T$ separates the three special marks nontrivially. In the first case,
the completion edge joining that pair crosses $C$. In the second case,
no matter which side of the cut contains the new vertex $x$, at least one
of the three edges from $x$ to the special marks crosses $C$.

This proves the crossing property.

\medskip
\noindent\textbf{$\widehat G$ is simple and has girth at least $4$.}
By Vizing's Adjacency Lemma applied to the $3$-critical graph $G$, no two
degree-$2$ vertices of $G$ are adjacent or share a common neighbor. Each
degree-$2$ vertex is used exactly once in the completion. Therefore no
completion edge duplicates an edge of $G$, and no completion edge creates
a triangle. In the case where the new vertex $x$ is added, its three
neighbors are degree-$2$ vertices of $G$, and these are pairwise
nonadjacent. Thus no triangle passes through $x$. Since $G$ has girth at
least $4$, it follows that $\widehat G$ has girth at least $4$.

\medskip
\noindent\textbf{$\widehat G$ is class $2$.}
This follows since 
 $G\subseteq \widehat G$ and  $G$ is class $2$. 

\medskip
\noindent\textbf{$\widehat G$ is cyclically $4$-edge-connected.}
We first record an auxiliary fact. Let $D$ be one side of an edge-cut of
$G$ of size at most $3$, and suppose that $G[D]$ is acyclic. If $G[D]$
has $q$ components, then
\[
\sum_{v\in D}(d_G(v)-2)=|\delta_G(D)|-2q.
\]
Since $\delta(G)\ge 2$ and $|\delta_G(D)|\le 3$, this forces $q=1$.
Thus $G[D]$ is a tree and the left-hand side is at most $1$. Hence $D$
contains at most one vertex of degree $3$ in $G$, and all other vertices
of $D$ have degree $2$ in $G$. If $D$ contained two degree-$2$ vertices
of $G$, then, in this tree, there would either be two adjacent degree-$2$
vertices or two degree-$2$ vertices with a common neighbor, contradicting
Vizing's Adjacency Lemma. Hence
\begin{equation}\label{eq:acyclic-side}
\text{an acyclic side of an edge-cut of $G$ of size at most $3$ contains
at most one degree-$2$ vertex of $G$.}
\end{equation}

Suppose, for a contradiction, that $\widehat G$ has a cyclic edge-cut
$F=\delta_{\widehat G}(A)$ with $|F|\le 3$. Put
\[
A_0=A\cap V(G)
\qquad\text{and}\qquad
C_0=\delta_G(A_0).
\]
Then $C_0\subseteq F$, because every edge of $G$ crossing between $A_0$
and $V(G)\setminus A_0$ also crosses between $A$ and
$V(\widehat G)\setminus A$. Hence $|C_0|\le 3$.

We claim that both $G[A_0]$ and $G[V(G)\setminus A_0]$ contain a cycle.
Suppose first that $G[A_0]$ is acyclic. By \eqref{eq:acyclic-side},
$A_0$ contains at most one degree-$2$ vertex of $G$. Since $F$ is cyclic,
the $A$-side of $\widehat G-F$ contains a cycle. This cycle cannot lie
entirely in $G[A_0]$, so it uses a completion edge. A paired completion
edge with exactly one end in $A_0$ lies in $F$ and is absent from the
$A$-side of $\widehat G-F$. Hence any paired completion edge used by the
cycle has both ends in $A_0$, giving at least two degree-$2$ vertices of
$G$ in $A_0$. In the case where the cycle uses the new vertex $x$, it
uses two edges incident with $x$, and again their old ends give at least
two degree-$2$ vertices of $G$ in $A_0$. Both possibilities contradict
\eqref{eq:acyclic-side}. Therefore $G[A_0]$ contains a cycle. The same
argument applied to the other side shows that $G[V(G)\setminus A_0]$
contains a cycle.

Thus $C_0$ is a cyclic edge-cut of $G$ with $|C_0|\le 3$. Since $G$ is
cyclically $3$-edge-connected, we have $|C_0|=3$. If $G$ has no cyclic
$3$-edge-cut, this is already a contradiction. Otherwise, $C_0$ is a
cyclic $3$-edge-cut of $G$, and by the crossing property some completion
edge crosses $C_0$. Therefore this completion edge lies in $F$.

But $C_0\subseteq F$ and the three edges of $C_0$ are edges of $G$, while
the completion edge is not an edge of $G$. Hence 
$|F|\ge |C_0|+1=4$, 
contradicting $|F|\le 3$. Therefore $\widehat G$ has no cyclic edge-cut
of size at most $3$, and so $\widehat G$ is cyclically
$4$-edge-connected.

Combining the preceding parts, $\widehat G$ is a cyclically
$4$-edge-connected cubic class $2$ graph of girth at least $4$.
\end{proof}

\section{Algorithmic Aspects}\label{S:algorithmic}

In this section we describe the search pipeline used to enumerate the
nontrivial $3$-critical graphs of orders $13$ through $22$.  We first give
the pipeline overview in Subsection~\ref{SS:pipeline}, then record the
per-order statistics in Subsection~\ref{SS:per-order-stats}, and finally give
the census categorization in Subsection~\ref{S:census-map}.

\textbf{Data and code availability.}  All census output files,
machine-readable audit reports, and search/audit code referenced in this paper
are archived in the public repository
\begin{center}
  \url{https://github.com/chenle02/edge-3-critical-graphs-data}
\end{center}
(release \texttt{v1.1.1}).  A permanently archived snapshot of this release is
deposited on Zenodo with concept DOI \texttt{10.5281/zenodo.20821990}~\cite{edge3critical-data},
which always resolves to the latest version.  Throughout the paper, file paths of the forms
\texttt{results/\dots}, \texttt{reports/\dots}, and \texttt{code/\dots} are
relative to this repository, which we call the \emph{data repository}.  The
README records SHA-256 hashes of the census files and gives a file-to-claim
mapping.  The implementation is under \texttt{code/}.  Its input and output are
graph6 strings, and the manuscript-facing records are the per-order JSON files
under \texttt{results/}.

\subsection{Pipeline overview}\label{SS:pipeline}

The pipeline processes each target order $n$ in five stages.

\begin{enumerate}
  \item \textbf{Generate.}  The program calls \texttt{geng} from the
  \texttt{nauty} package with the flags
  \[
    \texttt{-Cq -d2 -D3 } n .
  \]
  Thus only $2$-connected graphs with minimum degree at least $2$ and maximum
  degree at most $3$ are generated.  The exact command used by the current code
  is built in \texttt{code/main.py}.  For example, at order $13$ this generation
  step produces $11{,}679$ candidates.

  \item \textbf{Prune.}  A generated graph is discarded if it satisfies either
  of the following two filters from
  \[
    \texttt{code/critical\_graph\_search/pruning.py}.
  \]
  \begin{enumerate}
    \item[(F1)] It is bipartite.  By K\H{o}nig's Theorem, every
    bipartite graph is class $1$~\cite{MR1511872}.

    \item[(F2)] It is regular.  In the present subcubic search, regular
    candidates cannot contribute to the $3$-critical census: $2$-regular
    candidates have maximum degree $2$, while cubic candidates are not
    $3$-critical.
  \end{enumerate}

  \item \textbf{Class~$2$ test.}  For each remaining candidate, the program
  checks whether $G$ is $\Delta(G)$-edge-colorable by backtracking.  If such a
  coloring exists, then $\chi'(G)=\Delta(G)$ and the graph is discarded.

  \item \textbf{Edge-criticality test.}  For each class~$2$ candidate, the
  program verifies that $G-e$ is $\Delta(G)$-edge-colorable for every edge
  $e\in E(G)$.  This is the implemented $\Delta$-criticality test in
  \[
    \texttt{code/critical\_graph\_search/criticality.py}.
  \]
  The primary census uses the bitmask backtracking implementation.  The
  companion reference backtracking implementation in
  \texttt{code/critical\_graph\_search/edge\_coloring.py} is used in later
  audit scripts to cross-check the manuscript-facing survivors.

  \item \textbf{Overfull-subgraph test.}  Finally, the program searches all odd
  induced subgraphs $H$ and discards $G$ if
  \[
    |E(H)|>\Delta(G)\left\lfloor\frac{|V(H)|}{2}\right\rfloor.
  \]
  Such a graph contains a $\Delta(G)$-overfull subgraph and is treated as
  trivial for the purposes of this census.  The remaining graphs are the
  nontrivial $3$-critical survivors.
\end{enumerate}

The per-order statistics produced by these stages are recorded in
Subsection~\ref{SS:per-order-stats}.

\subsection{Per-order statistics}\label{SS:per-order-stats}

Table~\ref{tab:per-order-stats} records the quantitative throughput of the
pipeline for the five odd orders used in the census.  The column ``After
pruning'' is the generated count minus the graphs removed by the two pruning
filters~(F1) and~(F2).  The column ``Class 1 / non-critical'' records the
remaining graphs that are either $3$-edge-colorable or fail the
edge-criticality test.

\begin{table}[h]
  \centering
  \scriptsize
  \setlength{\tabcolsep}{3pt}
  \caption{Per-order pipeline statistics for $\Delta=3$ at odd orders.  Source
  data are in the data repository directory \texttt{results/}.}
  \label{tab:per-order-stats}
  \begin{tabular}{c|r|r|r|r|r|r}
    \hline
    Order $n$ & Generated & After pruning & Class 1 / non-critical &
    Total critical & Overfull critical & Nontrivial survivors \\
    \hline
    $13$ & $11{,}679$ & $11{,}380$ & $10{,}624$ & $755$ & $741$ & $14$ \\
    $15$ & $165{,}993$ & $163{,}531$ & $156{,}559$ & $6{,}971$ & $6{,}877$ & $94$ \\
    $17$ & $2{,}756{,}486$ & $2{,}732{,}845$ & $2{,}655{,}174$ & $77{,}670$ & $76{,}896$ & $774$ \\
    $19$ & $51{,}643{,}246$ & $51{,}387{,}795$ & $50{,}380{,}366$ & $1{,}007{,}427$ & $1{,}000{,}443$ & $6{,}984$ \\
    $21$ & $1{,}068{,}435{,}908$ & $1{,}065{,}397{,}521$ & $1{,}050{,}610{,}728$ & $14{,}786{,}792$ & $14{,}716{,}262$ & $70{,}530$ \\
    \hline
  \end{tabular}
\end{table}

The computation scales from $11{,}679$ generated candidates at order $13$ to
$1{,}068{,}435{,}908$ generated candidates at order $21$.  Among the critical
graphs, the overfull fraction is very high: in every row of
Table~\ref{tab:per-order-stats}, it is above $98\%$.  Thus most graphs passing
the class~$2$ and edge-criticality tests are removed by the overfull-subgraph
obstruction.  The nontrivial survivor counts are
\[
  14,\ 94,\ 774,\ 6{,}984,\ 70{,}530,
\]
which grow rapidly along the odd orders and suggest a conjectural growth
pattern.
Empirically, each step of $+2$ in the order multiplies the number of
generated candidates by roughly a factor of $18$ and the number of nontrivial
survivors by roughly a factor of $10$ (for example, $6{,}984$ at order $19$ versus
$70{,}530$ at order $21$); the corresponding CPU cost grows by a factor of about
$22$ per step.  These ratios are what make the exhaustive census beyond order $22$
computationally demanding and motivate the finite scope adopted here.

For reproducibility, the SHA-256 hashes of the source JSON files
\texttt{order\_n\_delta\_3.json} or compressed
\texttt{order\_n\_delta\_3.json.gz}, for $n=13,15,17,19,21$, are respectively
\[
\begin{gathered}
\texttt{799aae0712bfb53b10279cdb178abd9ae2b55924f134d2fd3f9154ad4527ef10},\\
\texttt{c5c391d32a4019a6765e236ac9bfbd292572722073fbb72312d4ecbf91293162},\\
\texttt{c7447e9626f53cdb8a381376ec76eb95a58368bab85dd16dd02ba6d4f7b9a269},\\
\texttt{9f4eff7e13636fce3bcd3fc69cd2a3dfb36f1ad80e01656509fc6b9927f92b1e},\\
\texttt{2f9c3e46d744dc0b62a95631e050af657806ec76ae03a286f7e845c69cff24db}.
\end{gathered}
\]
The critical split is exact in each file:
\[
  \text{Total critical}
  =
  \text{Overfull critical}
  +
  \text{Nontrivial survivors}.
\]
The broader post-pruning ledger is also consistent in the required direction:
``After pruning'' is at least ``Class 1 / non-critical'' plus ``Total critical''
in every row.  The JSON records have residual slacks of $1$, $1$, $1$, $2$, and
$1$ graphs for $n=13,15,17,19,21$, respectively.

For completeness, we also ran the same pipeline at the even orders.  By the
result of Brinkmann and Steffen~\cite{BrinkmannSteffen1997}, there is no nontrivial $3$-critical graph of
even order below $22$, and exactly one of order $22$; our pipeline reproduces
this.  Table~\ref{tab:per-order-stats-even} records the even-order throughput.

\begin{table}[h]
  \centering
  \scriptsize
  \setlength{\tabcolsep}{3pt}
  \caption{Per-order pipeline statistics for $\Delta=3$ at even orders.  Source
  data are in the data repository directory \texttt{results/}.}
  \label{tab:per-order-stats-even}
  \begin{tabular}{c|r|r|r|r|r|r}
    \hline
    Order $n$ & Generated & After pruning & Class 1 / non-critical &
    Total critical & Overfull critical & Nontrivial survivors \\
    \hline
    $14$ & $43{,}418$ & $41{,}751$ & $41{,}750$ & $0$ & $0$ & $0$ \\
    $16$ & $666{,}854$ & $653{,}513$ & $653{,}512$ & $0$ & $0$ & $0$ \\
    $18$ & $11{,}780{,}245$ & $11{,}650{,}297$ & $11{,}650{,}296$ & $0$ & $0$ & $0$ \\
    $20$ & $232{,}275{,}898$ & $230{,}799{,}117$ & $230{,}799{,}116$ & $0$ & $0$ & $0$ \\
    $22$ & $5{,}022{,}269{,}988$ & $5{,}003{,}271{,}897$ & $5{,}003{,}271{,}895$ & $1$ & $0$ & $1$ \\
    \hline
  \end{tabular}
\end{table}

At the even orders $14,16,18$, and $20$, the pipeline finds no
$3$-critical graphs.  At order $22$, it leaves exactly one nontrivial survivor,
agreeing with the known Brinkmann--Steffen count.  In every recorded row, the
identity
\[
  \text{Total critical}
  =
  \text{Overfull critical}
  +
  \text{Nontrivial survivors}
\]
holds, and the post-pruning ledger carries the same residual slack of $1$ graph
seen at the odd orders.

For order $22$, the final run generated $5{,}022{,}269{,}988$ $2$-connected
subcubic graphs and left exactly one nontrivial $3$-critical survivor.  The
source file is \texttt{results/order\_22\_delta\_3.json.gz}, with compressed
SHA-256 hash
\[
\texttt{57dbfccd9cb352564f5422530c9a0b7e269148c9789bf040f0dfd7ab96ed553e}.
\]
A separate audit of the single survivor checks both edge-criticality and
non-overfullness with the primary and reference implementations.

\subsection{Categorization according to Theorem~\ref{thm:characterization}}
\label{S:census-map}

For each order $n\in\{13,15,17,19,21,22\}$, we categorize each survivor by
the first applicable case in the proof of Theorem~\ref{thm:characterization}.
First, if $G$ has an independent triangle, then it falls under clause~(a), the
vertex-blowup case.  Second, if $G$ has a triangle but no independent triangle,
then it falls under clause~(b), the Haj\'os-join case, by
Theorem~\ref{lem:Hajos2}.  Third, if $G$ has no triangle but has a cyclic
$2$-edge-cut, then it again falls under clause~(b).

Now suppose that $G$ has girth at least $4$ and is cyclically
$3$-edge-connected.  There are two cases.  In the first case, some 
cyclic $3$-edge-cut side $H$ contains no vertex that has degree $2$ in $G$.
Then, by Theorem~\ref{lem:cyclic-cut-reduction}, we may contract $H$ to a
degree-$3$ vertex of a smaller nontrivial $3$-critical graph.  Hence $G$ is a
Meredith-type extension and falls under clause~(c).  In the second case, every cyclic $3$-edge-cut side contains a degree-$2$ vertex of $G$.  Then, by
Lemma~\ref{lem:snark-reduction}, $G$ has a snark completion.  If $n$ is odd,
then $G$ is obtained from a snark of order $n+1$ by deleting one vertex and some
edges, so it falls under clause~(d).  If $n$ is even, then $G$ is obtained from
a snark of order $n$ by deleting some edges, so it falls under clause~(e).

Applying this decision procedure to the full survivor censuses gives the
partition recorded in Table~\ref{tab:census-categorization}.  In every row, the
four category counts sum exactly to the number of nontrivial survivors of that
order.  For the odd orders, the snark-completion column is clause~(d), while for
the even order $n=22$ it is clause~(e).  The vertex-blowup clause~(a) dominates
at every order: it accounts for $5{,}928$ of the $6{,}984$ order-$19$ survivors
and $60{,}479$ of the $70{,}530$ order-$21$ survivors.

\begin{table}[h]
  \centering
  \scriptsize
  \setlength{\tabcolsep}{4pt}
  \caption{Census categorization of the nontrivial $3$-critical survivors by the
  first applicable clause of Theorem~\ref{thm:characterization}, in the priority
  order vertex-blowup~(a), Haj\'os-join~(b), Meredith-type~(c),
  snark-completion~(d)/(e).  Each row is an exact partition: the four category
  counts sum to the survivor count.  Source data are in the data repository
  directory \texttt{results/}; classification script
  \texttt{scripts/classify\_census\_characterization.py}.}
  \label{tab:census-categorization}
  \begin{tabular}{c|r|r|r|r|r}
    \hline
    Order $n$ & Survivors & (a) Vertex-blowup & (b) Haj\'os-join &
    (c) Meredith-type & (d)/(e) Snark-completion \\
    \hline
    $13$ & $14$        & $10$        & $3$        & $1$        & $0$ \\
    $15$ & $94$        & $76$        & $13$       & $5$        & $0$ \\
    $17$ & $774$       & $639$       & $91$       & $18$       & $26$ \\
    $19$ & $6{,}984$   & $5{,}928$   & $712$      & $140$      & $204$ \\
    $21$ & $70{,}530$  & $60{,}479$  & $6{,}693$  & $1{,}001$  & $2{,}357$ \\
    $22$ & $1$         & $0$         & $0$        & $0$        & $1$ \\
    \hline
  \end{tabular}
\end{table}

\section*{Declaration of Use of AI Tools}

In the course of this work, the authors used AI-based tools---large language
models (including ChatGPT~5.5 and Claude Opus) and AI coding assistants---in the
following ways. The candidate graphs were generated by \texttt{geng} from the
\texttt{nauty} suite; AI coding assistants helped develop, debug, and document
the search, classification, and audit code used to produce and categorize the
census, which is archived in the data repository (see the data-and-code
availability statement in Section~\ref{S:algorithmic}). The authors established
and proved Theorem~\ref{thm:characterization} through their own mathematical
analysis, informed by examining the generated graphs. The authors also used a
large language model to assist with language editing, grammar, clarity, and
formatting of the manuscript. The authors reviewed, tested, and verified all
AI-assisted code, and independently checked all definitions, statements,
proofs, and computational results. The authors assume responsibility for all
content.

\section*{Acknowledgments}
This work was completed in part with resources provided by the Auburn University
Easley Cluster.

\bibliographystyle{abbrv}
\bibliography{3-critical}

\end{document}